\newtheorem{theorem}{Theorem}[section]
\newtheorem{lemma}[theorem]{Lemma}
\newtheorem{proposition}[theorem]{Proposition}
\newtheorem{corollary}[theorem]{Corollary}
\theoremstyle{definition}
\newtheorem{definition}[theorem]{Definition}
\theoremstyle{remark}
\newtheorem{remark}[theorem]{Remark}
\newcommand*{\pbcorner}[1][dr]{\ar[#1,phantom,"\lrcorner" , very near start]}
\newcommand*{\pocorner}[1][dr]{\ar[#1,phantom,"\ulcorner" , very near end]}
\newcommand{\intmeet}{\pmb\wedge}
\newcommand{\intjoin}{\pmb\vee}
\newcommand{\colonequiv}{\mathrel{\vcentcolon\mspace{-1mu}\equiv}}
\newcommand{\defeq}{\colonequiv}
\newcommand{\equivcolon}{\mathrel{\equiv\mathrel{:}}}
\newcommand{\Ltimes}{\mathbin{\widehat\times}}
\newcommand{\Lexp}{\mathbin{\pitchfork}}
\newcommand*{\lprod}{\Ltimes}
\newcommand*{\lexp}{\Lexp}
\newcommand{\N}{\mathbb N}
\newcommand{\Fin}[1]{\mathsf{Fin}\, #1}
\DeclareMathOperator{\ap}{ap}
\DeclareMathOperator{\refl}{refl}
\DeclareMathOperator{\fiber}{fib}
\DeclareMathOperator{\const}{const}
\DeclareMathOperator{\pr}{pr}
\newcommand*{\fst}{\pr_1}
\newcommand*{\snd}{\pr_2}
\newcommand*{\U}{\mathcal U}
\newcommand*{\C}{\mathcal C}
\newcommand*{\D}{\mathcal D}
\DeclareMathOperator{\Map}{\textup{\textsf{Map}}}
\DeclareMathOperator{\Fam}{\textup{\textsf{Fam}}}
\newcommand*{\join}{\mathbin{\ast}}
\newcommand{\sbullet}{\raisebox{1.15pt}{\scalebox{0.5}{$\bullet$}}}
\newcommand{\SigmaT}[2]{\textstyle\sum\pa*{#1}\,{#2}}
\newcommand{\PiT}[2]{\textstyle\prod\pa*{#1}\,{#2}}
\newcommand{\lambdadot}[2]{\lambda{#1}.\,{#2}}
\DeclareMathOperator{\id}{id}
\DeclareMathOperator{\op}{op}
\DeclarePairedDelimiter{\pa}{(}{)}
\newcommand{\orth}{\mathrel{\perp}}
\newcommand{\qedNoProof}{\hfill\qedsymbol}
\newcommand{\formalized}{{\color{NavyBlue!75!White}{\raisebox{-0.5pt}{\scalebox{0.8}{\faCog}}}}}
\newcommand{\flinkurl}[1]{\href{#1}{\formalized}}
\newcommand{\baseurl}{https://leibniz-stt.github.io/Paper.html}
\newcommand{\flink}[1]{\flinkurl{\baseurl\##1}}
\newenvironment{fequation}[1]
{
  \newtagform{formalized-eq}{(\flink{#1}\,}{)}%
  \usetagform{formalized-eq}%
  \begin{equation}%
}
{
  \end{equation}%
  \usetagform{default}\ignorespacesafterend%
}
\begin{document}

\title[The Leibniz adjunction in HoTT, with an application to STT]%
{The Leibniz adjunction in homotopy type theory,\\ with an application to simplicial type theory}

\author{Tom de Jong}
\author{Nicolai Kraus}
\author{Axel Ljungstr\"om}
\email{%
  \{\href{mailto:tom.dejong@nottingham.ac.uk}{tom.dejong}, \href{mailto:nicolai.kraus@nottingham.ac.uk}{nicolai.kraus}, \href{mailto:axel.ljungstrom@nottingham.ac.uk}{axel.ljungstrom}\}@nottingham.ac.uk}

\urladdr{\url{https://tdejong.com}}
\urladdr{\url{https://nicolaikraus.github.io}}
\urladdr{\url{https://aljungstrom.github.io}}

\address{School of Computer Science, University of Nottingham, UK}

\begin{abstract}
  \emph{Simplicial type theory} extends homotopy type theory and equips types
  with a notion of directed morphisms. A \emph{Segal type} is defined to be a
  type in which these directed morphisms can be composed. We show that all
  higher coherences can be stated and derived if simplicial type theory is taken
  to be homotopy type theory with a postulated interval type. In technical
  terms, this means that if a type has unique fillers for $(2,1)$-horns, it has
  unique fillers for all inner $(n,k)$-horns. This generalizes a result of Riehl
  and Shulman for the case $n = 3$, $k \in \{1,2\}$.

  Our main technical tool is the Leibniz adjunction: the pushout-product is left
  adjoint to the pullback-hom in the wild category of types. While this
  adjunction is well known for ordinary categories, it is much more involved for
  higher categories, and the fact that it can be proved for the wild category of
  types (a higher category without stated higher coherences) is non-trivial. We
  make profitable use of the equivalence between the wild category of maps and
  that of families.

  We have formalized the results in Cubical Agda.
\end{abstract}

\keywords{%
  Homotopy type theory,
  simplicial type theory,
  higher categories,
  Leibniz adjunction,
  pushout-product,
  pullback-hom,
  Segal type
}

\maketitle

\section{Introduction}

Riehl and Shulman have introduced \emph{simplicial type theory (STT)} as a
synthetic (bespoke) framework
for higher categories~\cite{RS2017}.
It extends homotopy type theory, where types are equipped with invertible
(higher) paths, making them behave like $\infty$-groupoids rather than
categories.
Therefore, STT additionally equips types with a notion of \emph{directed}
morphisms via an interval.
We naturally wish to compose these directed morphisms, and those types where
this is possible are referred to as \emph{Segal types}.
Only having composition is not enough for a well behaved theory of higher
categories: we want composition to be associative and for identities to be
neutral with respect to composition.
For ordinary 1-categories, that would be sufficient, since equations are mere
properties, but for higher categories, equations carry genuine data and there
are (infinitely many) additional coherences to be fulfilled.
A simple example illustrating this is the requirement that the different ways of
rewriting \({\id} \circ (f \circ \id)\), illustrated below, are in fact the
same, up to a higher morphism.
\[
  \begin{tikzcd}[column sep=0mm, row sep=4mm]
    {\id} \circ (f \circ \id) \ar[rr,Rightarrow]
    \ar[d,Rightarrow]
    &&  ({\id} \circ f) \circ \id
    \ar[d,Rightarrow] \\
    f \circ \id \ar[dr,Rightarrow] && {\id} \circ f \ar[dl,Rightarrow] \\
    & f
  \end{tikzcd}
\]

Informally, the \textbf{first main result} of this paper
(\cref{thm:inner-horns-are-anodyne}) states that for Segal types, \emph{all}
higher coherences can be derived.
Simplicial techniques give us a precise and combinatorial framework for stating
higher coherences.
The 2-simplex~\(\Delta^2\) is pictured as a filled-in triangle with directed
edges while the (2,1)-horn \(\Lambda^2_1\) is obtained by omitting the filling and the
hypotenuse:
\[
  \Delta^2 = \begin{tikzcd}[column sep=.3cm, row sep=.3cm,%
    execute at end picture={
      \begin{pgfonlayer}{background}
        \foreach \Name in  {A,B,C}
        {\coordinate (\Name) at (\Name.center);}
        \fill[dashed,gray!40]
        (A) -- (B) -- (C) -- cycle;
      \end{pgfonlayer}
    }
    ]
    & |[alias=A]|1 \ar[dr,"12",shorten=-5pt] \\
    |[alias=C]|0 \ar[ur,"01",shorten=-5pt] \ar[rr,"02"',shorten=-3pt] & & |[alias=B]|2
  \end{tikzcd}
  \qquad\text{and}\qquad
  \Lambda^2_1
  =
  \begin{tikzcd}[column sep=.3cm, row sep=.3cm]
    & 1 \ar[dr,"12",shorten=-5pt] \\
    0 \ar[ur,"01",shorten=-5pt] & & 2
  \end{tikzcd}
\]
The idea is that a map \(\Lambda^2_1 \to X\) picks out two composable morphisms
in \(X\), by \(01 \mapsto f\) and \(12 \mapsto g\), say, and if we can extend it to a map \(\Delta^2 \to X\), we can take their composition \(g \circ f\) to be the image of the edge \(02\).
The filling, a $2$-dimensional cell, then witnesses that the triangle commutes up to this higher cell.
If such an extension is unique (in the homotopy type theory sense), then we say that \(X\)
has unique fillers for \(\Lambda^2_1\)-horns, and we call $X$ a \emph{Segal type}.
Of course, one can similarly consider higher-dimensional simplices~\(\Delta^n\)
and (inner or outer) horns \(\Lambda^n_k\), for \(n \in \mathbb N\) and \(0 \leq k \leq n\), which encode the higher coherences of the composition.
\cref{thm:inner-horns-are-anodyne} can then be phrased more precisely as: any
Segal type has unique fillers for all such inner horns.
This theorem generalizes~\cite[Proposition~5.12]{RS2017} of Riehl and Shulman
where they consider \(n = 3\) and \(k \in \{1,2\}\), and is a version of
Lurie's~\cite[Corollary 2.3.2.2]{LurieHTT} in simplicial type theory.

The \textbf{second contribution} lies in our approach and technical development.
The idea, going back Joyal (cf.~just before Proposition~7.6 in
\cite{JoyalTierney2006} where \cite{Joyal2008} is also cited), is to reduce
the situation to the abstract framework provided by the orthogonality and
Leibniz calculus~\cite{riehl2014theory}.
Our main technical tool is then that the pushout-product is left adjoint to the
pullback-hom (\cref{Leibniz-adjunction-main-thm:alt}) in the wild
category of types.
While this result is well known for 1-categories, a naive approach to proving
this for the wild category of types quickly becomes intractable due to the
involved bookkeeping of coherences.
Thankfully, as a consequence of the univalence axiom, functions and dependent
type families are equivalent.
This yields an equivalence between the wild category of maps and the wild
category of families, and we illustrate how the latter is a 
useful
substitute for the former.

\subsection{Foundational setup}\label{sec:foundational-setup}

Instead of working with Riehl--Shulman's simplicial type theory as introduced
in~\cite{RS2017}, we follow the approach of
Gratzer, Weinberger and Buchholtz~\cite{GWB2024,GWB2025} (in line
with Pugh~and~Sterling~\cite{PughSterling2025}) and work in plain
homotopy type theory with a postulated interval type.
In contrast, in the Riehl--Shulman framework, the interval is a concept on the
meta-level. The advantage of their approach is that one gets additional
definitional equalities, but the advantage of the approach followed here is
the possibility of formalizing everything in an
established proof assistant like (Cubical) Agda (cf.~\cref{sec:formalization}).

Compared to~\cite{GWB2024,GWB2025}, our assumptions on the interval type are
rather minimal: we only require it to be a poset that forms a bounded
distributive lattice. In particular, we do not require the order to be total,
i.e.\ we do not require $x \leq y \vee y \leq x$ for all $x$ and $y$.
Further, we do not assume the modalities of~\cite{GWB2024,GWB2025}.

\subsection{Related work}

In addition to the work discussed above~\cite{RS2017,GWB2024,GWB2025}, we
remark that various closure properties of left/right orthogonal maps
(cf.~\cref{sec:orthogonality}) have previously been formalized in Agda: first in
agda-unimath~\cite{agda-unimath:orthogonality} by~Bakke, and later by Toth
in~\cite{Toth2025}.
We note that the former does not include that left orthogonal maps are closed
under pushout-products (\cref{lem:left-orth-pushout-products}), which we obtain
as an application of the Leibniz adjunction.
The latter does contain this result, but uses a different proof method, and did
not contain the Leibniz adjunction.
Following our approach, Bakke has since formalized a dependent Leibniz
adjunction as part of Toth's Agda development for synthetic
mathematics~\cite{Bakke2026}.

\subsection{Formalization}\label{sec:formalization}

All our results are formalized using Cubical Agda~\cite{CubicalAgda}.
Our formalization builds on the cubical library~\cite{cubical} and type checks using
Agda~2.8.0.
The code and its HTML rendering are 
available at \url{https://github.com/leibniz-stt/agda-formalization} and
\url{https://leibniz-stt.github.io/README.html}.
The file \href{https://leibniz-stt.github.io/Paper.html}{Paper.html} acts an interface between the formal development
and the paper. In particular, (nearly) all environments in this paper are marked
with a \formalized{} symbol which is a clickable link to the corresponding
formalized statement in that file.

\section{Preliminaries}
At times we find it convenient to abbreviate \(\SigmaT{x : X}{Y\,x}\) as \(\sum Y\).

The following two basic, but extremely useful equivalences
(cf.~\cite[p.~3]{deJong2025}) will be frequently employed in various proofs in
\cref{sec:Leibniz-adjunction}, often without explicitly mentioning their names.

\begin{description}
\item[Contractibility of singletons]\label{proj-equivalences} For any type \(X\)
  and \(x : X\), the type \(\SigmaT{y : X}{x = y}\) is contractible (i.e.\ it is
  equivalent to the unit type) and hence the two projection maps from
  \(\SigmaT{x : X}{\SigmaT{y : X}{x = y}}\) to \(X\) are equivalences.
\item[Distributivity of \(\prod\) over \(\sum\)] For a type \(A\) and
  dependent types \(B\) and \(Y\) over \(A\) and \(\SigmaT{a : A}{B(a)}\),
  respectively, the map
  \begin{align*}
    \PiT{a : A}{\SigmaT{b : B(a)}Y(a,b)}
    &\to \SigmaT{f : \PiT{a : A}{B(a)}}{\PiT{a : A}Y(a,f(a))} \\
    \alpha &\mapsto (\lambdadot{a}{\fst(\alpha(a)}),\lambdadot{a}{\snd(\alpha(a))})
  \end{align*}
  is an equivalence with inverse \((f,p) \mapsto \lambdadot{a}(f(a),p(a))\).

  This, and especially its nondependent version
  \[
    \PiT{a : A}{\SigmaT{b : B}{Y(a,b)}} \to \SigmaT{f : A \to B}{\PiT{a :
        A}{Y(a,f(a))}},
  \] is traditionally called the ``type theoretic axiom of choice'', but this
  name is misleading because there is no choice involved, see also~\cite[pp.~32
  and 104]{HoTTBook}.
\end{description}

\section{The Leibniz Adjunction for Types}\label{sec:Leibniz-adjunction}

In \cite[Section~4]{riehl2014theory}, Riehl and Verity describe the
\emph{Leibniz construction}. Given a functor
$F : \mathcal K \times \mathcal L \to \mathcal M$, with $\mathcal M$ having all
pushouts, it yields a functor $\widehat{F}$ on the arrow categories,
$\widehat{F} : \mathcal K^I \times \mathcal L^I \to \mathcal M^I$.
If we are instead given a functor
$G : \mathcal K^{\op} \times \mathcal L \to \mathcal M$ that is contravariant in
its first argument, then, assuming \(\mathcal M\) has all pullbacks, we apply
the Leibniz construction to
\(G^{\op} : \mathcal K \times \mathcal L^{\op} \to \mathcal M^{\op}\).

We only apply this construction to one particular instance, namely the wild
category of types (recalled below), and to two particular (wild) functors, namely
\(F \defeq (-)\times(-)\) and \(G \defeq (-)\to(-)\).
The Leibniz construction then yields two wild functors on the wild category of
maps, namely the \emph{pushout-product} \({\widehat F \equivcolon (-)\lprod(-)}\)
and \emph{pullback-hom} \(\widehat G \equivcolon (-)\lexp(-)\), that we describe
explicitly in~\eqref{Leibniz-product}~and~\eqref{Leibniz-exponential}
respectively.

The main result of this section is \cref{Leibniz-adjunction-main-thm:alt} (and
its corollary) which can be seen as a version of Riehl and Verity's
1-categorical adjunction \cite[Lemma~4.10]{riehl2014theory}
internal to homotopy type theory.

\subsection{Wild categories and functors}

A naive translation of the definition of a (1-)category into homotopy
type theory results in the notion of a \emph{wild category}.
A wild category consists of
a type of objects and a type family of arrows with identities and an
associative composition operation for which the identities are left and right
neutral, as usual.
This notion is given the adjective ``wild'' because the types of arrows are not
restricted to be sets, nor do we require higher coherences on the identities and
composition~\cite{CapriottiKraus2017}.
The paradigmatic examples is the \emph{wild category of types} which has types
in a universe \(\U\) as objects and functions between such types as arrows.
The required identifications hold definitionally.

Similarly, we can naively translate the notion of a functor to arrive at the
definition of a \emph{wild functor} between two wild categories, which consists of a function between the types of objects, a family of functions between the types of arrows, and identifications witnessing that the latter maps identities to identities and compositions to compositions.
The Yoneda lemma extends to wild categories. In particular, we have a
contravariant wild functor from any wild category \(\C\) to the wild category of
types, given by \(A \mapsto \C(-,A)\), and two objects \(A\)~and~\(B\) of~\(\C\)
are isomorphic if and only if we have equivalences between \(\C(X,A)\) and
\(\C(X,B)\), natural in \(X\).

The following definition is in line with that of~\cite[Lemma~11]{KrausVonRaumer2019}, where the adjective \emph{naive} is dropped:
\begin{definition}[\flink{Definition-3-1} Naive isomorphism of wild categories]
	\label{def:wild-iso}
	A \emph{naive isomorphism between wild categories} \(\C\) and \(\D\) is a wild
	functor~\(F\) from \(\C\)~to~\(\D\) such that \(F\) is an equivalence on
	objects and on arrows.
\end{definition}

\begin{remark}[\flink{Remark-3-2}]
	The concepts of wild categories, wild functors, and naive isomorphisms are all ill-behaved \cite[\S 3.2 of the arXiv version]{kraus:inftyCwFs}.
	Nonetheless, they are often (as in the current paper) useful tools, even if they, at times, may feel ``conceptually wrong''.
	A wild category should be thought of as an approximation to an $(\infty,1)$-category;
	it can be described as an $(\infty,1)$-category without all coherence data above the level of associativity.
	However, the ``operation'' of removing coherence data does not respect equivalences; in other words, two equivalent definitions of a concept can become different once coherence above a certain level is removed.

	This also means that the terminology itself is not completely unambiguous.
	For example, it could be argued that a wild functor should come with components witnessing that the identifications for associativity and identity equations are preserved. The notion we use, which follows the literature, does not include these components. As the concepts under consideration are merely tools, this ambiguity does not influence 
        the final results.

	Nevertheless, care is required when the tools are used.
	A concrete shortcoming of \cref{def:wild-iso} is that naively isomorphic wild categories are, in general, not equal.
	This is because the data witnessing the equations of a wild category is
	not necessarily preserved by a naive isomorphism of wild categories.
	In the formalization, we found it convenient to additionally consider
	\emph{wild preorders} which only keep the objects, hom
	types, identities and composition parts of a wild category, thus dropping the
	equations.
	Thanks to univalence, if we extend the notion of naive isomorphism from wild
	categories to wild preorders, naive isomorphism again coincides with equality.
	The upshot of this fact is that we can transfer properties of wild
	categories along naive isomorphisms, as long as these properties do not
	explicitly mention equations, which is the case for all key properties proved
	in this paper.
\end{remark}

\subsubsection{The wild category of maps \texorpdfstring{(\flink{3-1-1})}{}}

We define the wild category \(\Map\) of maps. An object is a tuple
\((A : \U, B : \U,f : A \to B)\), often abbreviated to just \(f\).
An~arrow in \(\Map(f,g)\) is given by (the dashed data in) the type of
commutative squares
\[
  \begin{tikzcd}
    A \ar[r,dashed,"u"] \ar[d,"f"]
    & X \ar[dl,dashed,"\alpha",Rightarrow] \ar[d,"g"] \\
    B \ar[r,dashed,"v"] & Y
  \end{tikzcd}
\]
Spelled out, this data consists of
\begin{gather*}
  u : A \to X, \quad v : B \to Y, \quad \alpha : g \circ u = v \circ f.
\end{gather*}
The identity at \(f : A \to B\) is simply \((\id_A,\id_B,\refl)\).
The composition of two commutative squares is as follows:
\[
  \begin{tikzcd}
    A \ar[r,"u"] \ar[d,"f"]
    & X \ar[dl,"\alpha",Rightarrow] \ar[d,"g"] \\
    B \ar[r,"v"] & Y
  \end{tikzcd}
  \quad\text{and}\quad
  \begin{tikzcd}
    X \ar[r,"m"] \ar[d,"g"]
    & W \ar[dl,"\beta",Rightarrow] \ar[d,"h"] \\
    Y \ar[r,"n"] & Z
  \end{tikzcd}
  \quad\text{compose to}\quad
  \begin{tikzcd}
    A \ar[r,"m \circ u"] \ar[d,"f"]
    & W \ar[dl,"\alpha\star\beta",Rightarrow] \ar[d,"h"] \\
    B \ar[r,"n\circ v"] & Z
  \end{tikzcd}
\]
where \(\alpha\star\beta\) is the composition of \( \ap_{(-) \circ u}\beta\) and
\(\ap_{n \circ (-)}\alpha\), where we use that associativity of function
composition holds definitionally.
It is straightforward but somewhat tedious to construct the identifications required for
this to form a wild category.

If one spells out the Leibniz construction for the cartesian product on types,
then one arrives at the following.
The \emph{pushout-product} of two maps \(f : A \to B\) and \(g : X \to Y\) is
the induced (dashed) map \(f \lprod g\) in the following diagram.
\begin{fequation}{dagger}\label{Leibniz-product}\tag{\(\dagger\)}
  \begin{tikzcd}[column sep=.5cm]
    A \times X \pocorner
    \ar[r,"f\times\id_X"]
    \ar[d,"{\id_A}\times g"]
    & B \times X \ar[d]
    \ar[ddr,bend left,"{\id_B} \times {g}"]
    \\
    A \times Y \ar[r]
    \ar[drr,bend right=20,"f \times \id_Y"]
    & (B \times X) +_{A \times X} (A \times Y)
    \ar[dr,dashed,"f\lprod g"]
    \\
    & &
    B \times Y
  \end{tikzcd}
\end{fequation}

If one spells out the Leibniz construction for taking function types, then one
arrives at the following.
The \emph{pullback-hom} of two maps \({f : A \to B}\) and \(g : X \to Y\) is the
induced (dashed) map \(f \lexp g\) in the following diagram.
\begin{fequation}{double-dagger}\label{Leibniz-exponential}\tag{\(\ddagger\)}
  \begin{tikzcd}[column sep=.5cm]
    X^B
    \ar[dr,dashed,"f \lexp g"]
    \ar[drr,bend left=20,"X^f"]
    \ar[ddr,bend right,"g^B"]
    \\
    & X^A \times_{B^A} Y^B \pbcorner
    \ar[d] \ar[r]
    & X^A \ar[d,"g^A"]
    \\
    & Y^B \ar[r,"Y^f"]
    & Y^A
  \end{tikzcd}
\end{fequation}
Here, \(g^{\sbullet}\) and \(\sbullet^f\) respectively denote postcomposition by
\(g\) and precomposition by \(f\).

The main result (\cref{Leibniz-adjunction-main-thm:alt}) of this overall section
is to establish the Leibniz adjunction: there is a natural equivalence
\(\Map({f \lprod g},h) \simeq \Map(f,{g \lexp h})\).
Unfolding the definition of the pushout-product and the
universal property of the pushout, an element of \(\Map({f \lprod g},h)\) is a
7-tuple consisting of three maps, three identifications and one coherence.
Similarly, an element of \(\Map(f,{g\lexp h})\) is also a 7-tuple and we can
define maps back and forth, but we found the tracking of all of the details
(even in a proof assistant) to be too much.
The fact that the identifications each depend on two of the three maps with the
coherence in turn depending on two of the identifications results in a very
challenging shortfall of modularity.
The lack of a direct proof motivates the rest of the section and our preference
for working with families instead of maps.

\subsubsection{The wild category of families \texorpdfstring{(\flink{3-1-2})}{}}
We define the wild category \(\Fam\) of type families. An object is
a pair \((A : \U, B : A \to \U)\).
An arrow in \(\Fam((A,B);(X,Y))\) is given by a pair consisting of a map \(m : A \to B\)
together with a dependent map \(\mu : \PiT{a : A}{\pa*{B(a) \to Y(m\,a)}}\), so
\[
  \Fam((A,B);(X,Y)) \defeq \SigmaT{m : A \to B}{\PiT{a : A}{(B\,a \to Y(m\,a))}}.
\]
The identity at \((A,B)\) is simply \((\id_A,\lambda a.\id_{Ba})\).
Given two arrows \((m,\mu)\) and \((n,\nu)\) in \(\Fam((X,Y);(C,D))\),
their composition is given by
\(
(n \circ m , \lambda a.\nu(m\,a) \circ \mu\,a).
\)
%
Unlike for \(\Map\), the identifications for a wild category hold definitionally
for \(\Fam\).

\subsubsection{The wild categories of maps and families are equivalent}

The fact that the following maps form an equivalence is a well known consequence
of the univalence axiom (see e.g.~\cite[Theorem~4.8.3]{HoTTBook}) and lies at
the heart of this section.
\begin{equation}\label{fiber-correspondence}\tag{\(\ast\)}
  \begin{aligned}
    \SigmaT{A : \U}{\SigmaT{B : \U}{(A \to B)}}
    &\simeq
    \SigmaT{X : \U}{(X \to \U)} \\
    (A,B,f) &\xmapsto{\chi} (B,b \mapsto \fiber_fb) \\
    \pa*{\SigmaT{x : X}{\phi\,x},X,\fst} &\mapsfrom (X,\phi)
  \end{aligned}
\end{equation}
Here we recall that \(\fiber_f b\) is the \emph{fiber} of \(f\) at \(b\), i.e.\
the type \(\SigmaT{a : A}{f\,a = b}\).
We will often write \(\chi \, f\) instead of \(\chi(A,B,f)\), leaving the domain
and codomain of~\(f\) implicit.

\begin{proposition}[\flink{Proposition-3-3}]\label{Map-Fam-equivalent}
  The equivalence \(\chi\) extends to an isomorphism of the wild categories
  \({\Map}\) and \({\Fam}\).
\end{proposition}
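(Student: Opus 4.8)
The plan is to promote the type-level equivalence $\chi$ of~\eqref{fiber-correspondence} to a full isomorphism of wild categories by exhibiting its action on arrows, checking functoriality, and verifying that this action is itself an equivalence. Since an isomorphism of wild categories (\cref{def:wild-iso}) is a wild functor that is an equivalence on objects and on arrows, and $\chi$ is already an equivalence on objects by~\eqref{fiber-correspondence}, the remaining work is: (i) define a map $\mathrm{Map}(f,g) \to \mathrm{Fam}(\chi f, \chi g)$ for every pair of objects $f, g$, (ii) show it preserves identities and composition (the two equations a wild functor must satisfy, per the definition of wild functor used in the paper, cf.~\cref{footnote:wild-functor}), and (iii) show it is an equivalence for all $f, g$.

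First I would unfold the two hom-types concretely. For maps $f : A \to B$ and $g : X \to Y$, an arrow in $\mathrm{Map}(f,g)$ is a triple $(u : A \to X, v : B \to Y, \alpha : g \circ u = v \circ f)$; under $\chi$ these become the families $b \mapsto \fiber_f b$ over $B$ and $y \mapsto \fiber_g y$ over $Y$, so an arrow in $\mathrm{Fam}(\chi f, \chi g)$ is a pair $(v : B \to Y, \mu : \prod_{b : B}(\fiber_f b \to \fiber_g(v\, b)))$. The action of $\chi$ on arrows should send $(u,v,\alpha)$ to $v$ together with the fibrewise map $\mu$ that takes $(a, p : f\,a = b)$ to $(u\,a, \text{something} : g(u\,a) = v\,b)$, where the ``something'' is assembled from $\ap_v p$ and the pointwise version of $\alpha$ at $a$ (i.e.\ $\mathrm{happly}\,\alpha\,a : g(u\,a) = v(f\,a)$), concatenated appropriately. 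The key observation making this clean is that the data of a commutative square $(u,v,\alpha)$ is, via distributivity of $\prod$ over $\sum$ and function extensionality, equivalent to giving for each $b$ and each point of $\fiber_f b$ a point of $\fiber_g(v\,b)$ — indeed $g \circ u = v \circ f$ unfolds to $\prod_{a:A} g(u\,a) = v(f\,a)$, and reindexing along $f$ turns a family over $A$ into a fibrewise statement over $B$. I would make this precise by reusing the two basic equivalences highlighted in the preliminaries (contractibility of singletons and distributivity of $\prod$ over $\sum$).

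The functoriality checks are then bookkeeping: the identity square $(\id_A, \id_B, \refl)$ must map to $(\id_B, \lambda b.\,\id_{\fiber_f b})$, which should hold more or less definitionally or after a trivial path computation; and for composable squares the composite $\alpha \star \beta$ (built from $\ap_{(-)\circ u}\beta$ and $\ap_{n \circ (-)}\alpha$) must map to the fibrewise composite $\lambda b.\,\nu(\cdots) \circ \mu\, b$ that defines composition in $\mathrm{Fam}$ — this is where the associativity/naturality of $\ap$ and path concatenation enters, and it is the step I expect to be the main obstacle, precisely because the wild-category structure on $\mathrm{Map}$ involves nontrivial coherences (the $\star$ operation) whereas $\mathrm{Fam}$ is strict. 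Finally, to see $\chi$ is an equivalence on arrows I would either construct an explicit inverse on hom-types (sending $(v,\mu)$ back to $(u,v,\alpha)$ with $u\,a \defeq \fst(\mu(f\,a)(a,\refl))$ and $\alpha$ recovered from the second components) and check both round-trips, or — more economically — exhibit the arrow map as a composite of the standard equivalences (distributivity of $\prod$ over $\sum$, function extensionality, contractibility of singletons applied to collapse the $\Sigma_{b}$ reindexing), so that it is an equivalence for free. I would favour the latter route to keep coherence-wrangling to a minimum, matching the paper's stated strategy of leaning on the $\mathrm{Map} \simeq \mathrm{Fam}$ correspondence rather than grinding through squares by hand.
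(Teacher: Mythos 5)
Your overall template (define the action on arrows, check functoriality, check it is an equivalence) is the right one, but you have set yourself up to fight exactly the battle the paper's proof is designed to avoid, and you leave that battle unfought. The paper does \emph{not} extend $\chi$ in the forward direction; it extends $\chi^{-1}$. Given $(m,\mu) : \Fam((A,B);(X,Y))$, the functor produces the square $([m,\mu], m, \refl)$ over the two projections $\fst^B$ and $\fst^Y$, where $[m,\mu](a,b) \defeq (m\,a,\mu\,a\,b)$. Because every square in the image has $\refl$ as its commutativity witness and $[n,\nu]\circ[m,\mu]$ is definitionally $[n\circ m,\lambda a.\,\nu(m\,a)\circ\mu\,a]$, preservation of identities and composition is immediate --- no interaction between $\star$, $\ap$, and path concatenation ever arises. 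The equivalence-on-arrows is then established by fitting $\chi^{-1}$ into a commuting diagram of canonical equivalences (distributivity of $\prod$ over $\sum$ and contractibility of singletons), much as you describe. Since \cref{def:wild-iso} only asks for \emph{a} wild functor that is an equivalence on objects and arrows, giving it in the $\Fam\to\Map$ direction is perfectly legitimate.

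The gap in your version is the functoriality check for the forward map. You correctly identify that sending $(u,v,\alpha)$ to $(v,\mu)$ with $\mu$ built from $\ap_v$ and the pointwise form of $\alpha$ is the right formula, and you correctly flag that compatibility of $\alpha\star\beta$ with the strict fibrewise composition in $\Fam$ is ``the main obstacle'' --- but you then offer no argument for it. Your proposed economy measure (presenting the arrow map as a composite of standard equivalences so that it is ``an equivalence for free'') only discharges requirement (iii); it says nothing about (ii), since a map can be an equivalence on each hom-type without preserving composition. As written, the proof is incomplete at precisely the coherence-heavy step. Either supply that $\ap$/concatenation computation explicitly, or --- better --- reverse the direction as the paper does, so that the functor lands on squares with $\refl$ witnesses and the coherence problem evaporates.
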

\begin{proof}
  We show that \(\chi^{-1}\) extends to a wild functor that is an
  equivalence on arrows.
  Given \((m,\mu) : \Fam((A,B);(X,Y))\), we write
  \(\fst^B : \sum B \xrightarrow{\fst} A\) and
  \(\fst^Y : \sum Y \xrightarrow{\fst} X\), and define
  \[
    \chi^{-1}(m,\mu) : \Map(\fst^B,\fst^Y)
  \]
  to be the commutative square
  \[
    \begin{tikzcd}
      \sum B \ar[d,"\fst^B"] \ar[r,"{[m,\mu]}"]
      & \sum Y \ar[d,"\fst^Y"]
      \ar[dl,Rightarrow,"\refl", near start]
      \\
      A \ar[r,"m"] & X
    \end{tikzcd}
  \]
  where \([m,\mu]\) is given by \((a,b) \mapsto (m\,a,\mu\,a\,b)\).
  It is direct that this assignment preserves identities and composition.
  To see that it is an equivalence, we note that it fits into the following
  commutative diagram of canonical equivalences
  \[
    \begin{tikzcd}[column sep=-3em]
      \Fam((A,B);(X,Y)) \ar[r,"\chi^{-1}"]
      \ar[d,"\id"]
      & \Map(\fst^B,\fst^Y)
      \ar[dddl,"\simeq",bend left=35]
      \\
      \SigmaT{g : A \to X}{\PiT{a : A}{B\,a \to Y(g\,a)}}
      \\
      \SigmaT{g : A\to X}{\PiT{p : \textstyle\sum B}Y(g (\fst^B p))}
      \ar[u,"\simeq"]
      \\
      \SigmaT{g : A\to X}{\SigmaT{F : \textstyle\sum B \to \textstyle\sum X}{%
          \pa*{{\fst^Y} \circ F = g \circ {\fst^B}}}}
      \ar[u,"\simeq"]
    \end{tikzcd}
  \]
  as is not hard to verify.
\end{proof}

\subsection{The pushout-product and pullback-hom on \texorpdfstring{\(\Fam\)}{Fam}}

We recall from~\cite[p.~198]{HoTTBook} that the \emph{join} \(A \join B\) of two
types \(A\) and \(B\) is defined as the pushout of the projections
\(A \xleftarrow{\fst} A \times B \xrightarrow{\snd} B\).
Note that the join admits a straightforward extension to a wild endofunctor on
the wild category of types.

As observed by Rijke in~\cite[Theorem~2.2]{Rijke2017}, we have
the following result relating the pushout-product with joins.
\begin{proposition}[\flink{Proposition-3-4}]\label{Leibniz-product-is-fiberwise-join}
  The pushout-product is the fiberwise join: for \(f : A \to B\) and
  \(g : X \to Y\), we have, for any \(b : B\) and \(y : Y\), an equivalence
  \[
    \fiber_{f \lprod g}(b,y) \simeq \fiber_f(b) \join \fiber_g(y).
  \]
\end{proposition}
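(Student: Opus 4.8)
The plan is to compute $\fiber_{f \lprod g}(b,y)$ directly, by exploiting the fact that \emph{pushouts are stable under base change} (a form of descent / the flattening lemma): pulling a pushout square back along any map again yields a pushout square, and in particular taking fibers turns the defining pushout square of $f \lprod g$ into a pushout square of fibers.

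Concretely, I would first record that, by construction of $f \lprod g$ in~\eqref{Leibniz-product}, the span
\[
  \begin{tikzcd}[column sep=small]
    A \times X \ar[r, "f \times \id_X"] \ar[d, "\id_A \times g"'] & B \times X \\
    A \times Y & {}
  \end{tikzcd}
\]
carries a cocone with tip $B \times Y$, whose legs are $\id_B \times g$, $f \times \id_Y$ and (on the common vertex $A \times X$) $f \times g$; these agree since $(\id_B \times g)\circ(f \times \id_X) = f \times g = (f \times \id_Y)\circ(\id_A \times g)$. Since $(B\times X) +_{A \times X} (A \times Y)$ is the pushout, the induced map to $B \times Y$ is precisely $f \lprod g$. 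Pulling this cocone back along $(b,y)$, i.e.\ taking fibers over $(b,y)$, and using stability of pushouts under base change, we conclude that $\fiber_{f \lprod g}(b,y)$ is the pushout of the span obtained by taking the fibers of $f \times g$, $\id_B \times g$ and $f \times \id_Y$ over $(b,y)$.

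It then remains to identify this span. Using contractibility of singletons together with the characterisation of paths in product types, there are canonical equivalences $\fiber_{f \times g}(b,y) \simeq \fiber_f(b) \times \fiber_g(y)$, $\fiber_{\id_B \times g}(b,y) \simeq \fiber_g(y)$ and $\fiber_{f \times \id_Y}(b,y) \simeq \fiber_f(b)$, and a short unfolding shows that under these equivalences the two legs of the span become the projections $\fst$ and $\snd$ out of $\fiber_f(b) \times \fiber_g(y)$. The pushout of $\fiber_f(b) \xleftarrow{\fst} \fiber_f(b) \times \fiber_g(y) \xrightarrow{\snd} \fiber_g(y)$ is, by definition, the join $\fiber_f(b) \join \fiber_g(y)$, which completes the proof.

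I expect the main obstacle to be making the first step precise in the present setting: namely, that taking fibers sends the defining pushout of $f \lprod g$ to a pushout of fibers. Once pushout-stability under base change is available (as an instance of descent for pushouts, cf.\ Rijke), the rest is routine $\Sigma$-type bookkeeping. A slightly more hands-on alternative would be to transport $f \lprod g$ along the equivalence~$\chi$ of~\cref{Map-Fam-equivalent} into $\Fam$ and run a direct flattening-style computation there; one may use whichever is more convenient to formalise.
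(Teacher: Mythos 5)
Your proposal is correct and is essentially the standard argument behind this statement: the paper itself states the proposition without proof, deferring to the cited result of Rijke, whose proof proceeds exactly as you describe (the flattening lemma for pushouts identifies $\fiber_{f\lprod g}(b,y)$ with the pushout of the fibers of $\id_B\times g$, $f\times g$ and $f\times\id_Y$ over $(b,y)$, and that span is equivalent to the projection span $\fiber_f(b)\leftarrow\fiber_f(b)\times\fiber_g(y)\rightarrow\fiber_g(y)$, whose pushout is the join by definition). You have correctly isolated the one nontrivial ingredient, namely that fibers of a map induced out of a pushout are computed as pushouts of fibers, which is available in homotopy type theory as an instance of flattening/descent.
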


\cref{Leibniz-product-is-fiberwise-join} justifies, via
\cref{preservation-of-Leibniz-product}, the following definition.
\begin{definition}[\flink{Definition-3-5} Pushout-product of families, \((A,B) \lprod (X,Y)\)]
  The \emph{pushout-product} of two type families \(B : A \to \U\) and
  \(Y : X \to \U\) is the type family
  \[
    (A,B) \lprod (X,Y) \defeq (A \times X,(a,x) \mapsto B(a) \join Y(x)).
  \]
\end{definition}

\begin{proposition}[\flink{Proposition-3-6}]\label{preservation-of-Leibniz-product}
  The maps \(\chi\) and \(\chi^{-1}\) from \eqref{fiber-correspondence} preserve
  the pushout-product.
\end{proposition}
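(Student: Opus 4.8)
The plan is to show directly that applying $\chi$ to the pushout-product of maps $f : A \to B$ and $g : X \to Y$ yields (up to equivalence of objects in $\Fam$) the pushout-product of the corresponding families, and symmetrically for $\chi^{-1}$. Concretely, $\chi(f \lprod g)$ is the family over $B \times Y$ given by $(b,y) \mapsto \fiber_{f \lprod g}(b,y)$, while the pushout-product of the families $\chi f = (B, b \mapsto \fiber_f b)$ and $\chi g = (Y, y \mapsto \fiber_g y)$ is by definition the family over $B \times Y$ given by $(b,y) \mapsto \fiber_f(b) \join \fiber_g(y)$. So the two objects of $\Fam$ have definitionally equal base types, and \cref{Leibniz-product-is-fiberwise-join} supplies a fiberwise equivalence between them. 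The first step, then, is to package \cref{Leibniz-product-is-fiberwise-join} into the statement that $\chi(f \lprod g)$ and $(\chi f) \lprod (\chi g)$ are isomorphic as objects of $\Fam$, i.e.\ that the identity on base types together with the fiberwise equivalence constitutes an isomorphism in $\Fam$.

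For the direction involving $\chi^{-1}$, I would argue by transporting along the isomorphism $\chi$ of wild categories from \cref{Map-Fam-equivalent}. Given families $(A,B)$ and $(X,Y)$, their pushout-product is $(A \times X, (a,x) \mapsto B(a) \join Y(x))$, and applying $\chi^{-1}$ gives the first projection $\sum_{(a,x)} (B(a)\join Y(x)) \to A \times X$. On the other hand, $\chi^{-1}(A,B)$ and $\chi^{-1}(X,Y)$ are the first projections $\fst^B : \sum B \to A$ and $\fst^Y : \sum Y \to X$, and their pushout-product $\fst^B \lprod \fst^Y$ is the dashed map in~\eqref{Leibniz-product}. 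Since $\chi$ and $\chi^{-1}$ are mutually inverse equivalences (extended to wild isomorphisms), preservation of the pushout-product by $\chi^{-1}$ follows formally from preservation by $\chi$: one checks $\chi^{-1}((\chi f)\lprod(\chi g)) \simeq \chi^{-1}(\chi(f \lprod g)) \simeq f \lprod g$ and then uses that every object of $\Map$ is of the form $\chi^{-1}(\text{something})$ up to isomorphism, or more directly computes both sides on an arbitrary family and invokes \cref{Leibniz-product-is-fiberwise-join} again, now reading the fiber equivalence in the other direction. Either way the content is the same single fiberwise-join equivalence.

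I expect the main obstacle to be purely organizational rather than mathematical: making precise in what sense $\chi$ ``preserves'' the pushout-product. The pushout-product is a wild \emph{bi}functor, so preservation should mean that the square comparing $\chi \circ (\lprod)$ with $(\lprod) \circ (\chi \times \chi)$ commutes up to a natural family of isomorphisms in $\Fam$; spelling out naturality in both arguments against the explicit composition formulas for arrows in $\Map$ and $\Fam$ is where the bookkeeping lives. The clean way to handle this is to first establish the object-level isomorphism via \cref{Leibniz-product-is-fiberwise-join} and then observe that both $\chi$ and the pushout-product constructions are built from pullbacks, pushouts, and fibers in canonical ways, so that naturality of the comparison isomorphism reduces to naturality of the fiber-of-pushout-product equivalence, which in turn is inherited from Rijke's construction in~\cite{Rijke2017} and from the functoriality of the join. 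I would therefore state the proposition at the object level (``$\chi$ sends $f \lprod g$ to an object isomorphic to $(\chi f) \lprod (\chi g)$, and likewise for $\chi^{-1}$'') and remark that the isomorphism is natural, deferring the routine naturality verification — exactly as the excerpt does for the comparably tedious wild-category axioms of $\Map$.
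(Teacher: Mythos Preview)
Your approach is essentially the paper's: reduce to preservation by $\chi$ (since $\chi$ is an equivalence, preservation by $\chi^{-1}$ follows formally), and then invoke \cref{Leibniz-product-is-fiberwise-join} to identify $\chi(f \lprod g)$ with $(\chi f)\lprod(\chi g)$. The paper's proof is literally two sentences to this effect.

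One remark: your third paragraph worries about naturality of the comparison isomorphism as part of what ``preserves'' should mean, but the paper treats the proposition purely at the object level --- an identification of objects in $\Fam$ (hence, via univalence, an equality). This is all that is used downstream: in \cref{Leibniz-product-associative-commutative} and in \cref{Leibniz-adjunction-main-thm:alt} the proposition is invoked only to rewrite $\chi(f \lprod g)$ as $(\chi f)\lprod(\chi g)$, and the functorial action of $\lprod$ on $\Map$ is then \emph{defined} by transporting along these identifications rather than compared to a pre-existing one. So the naturality bookkeeping you anticipate is not required here.
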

\begin{proof}
  Since \(\chi\) is an equivalence, it suffices to show this for \(\chi\), but
  that follows immediately from \cref{Leibniz-product-is-fiberwise-join}.
\end{proof}

The pushout-product is actually a wild functor on \(\Fam\), and this fact is
used in stating~\cref{Leibniz-adjunction-main-thm:alt}.
\begin{proposition}[\flink{Proposition-3-7}]\label{Leibniz-product-functor}
  Given two arrows
  \((m,\mu) : \Fam((A,B);(X,Y))\) and \((n,\nu) : \Fam((A',B');(X',Y'))\),
  we construct
  \begin{align*}
    &(m,\mu) \lprod (n,\nu) : \Fam((A,B) \lprod (A',B');(X,Y) \lprod (X',Y')) \\
    &(m,\mu) \lprod (n,\nu) \defeq (m \times n, {(a,a') \mapsto \mu(a) \join \nu(a')}),
  \end{align*}
  where \(\join\) denotes the functorial action of the join.
  This makes \(\lprod\) a wild (binary) endofunctor on \(\Fam\).
\end{proposition}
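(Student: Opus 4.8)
The plan is to verify the wild-functor laws directly on $\Fam$, taking advantage of the fact that identities and composition in $\Fam$ (hence in $\Fam \times \Fam$) hold definitionally, and that on objects the pushout-product is \emph{literally} $(A,B) \lprod (A',B') = (A \times A',\ (a,a') \mapsto B(a) \join B'(a'))$, so no unfolding is needed there.

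First I would check that the stated assignment on arrows is well-typed. Given $(m,\mu) : \Fam((A,B);(X,Y))$ and $(n,\nu) : \Fam((A',B');(X',Y'))$, for each $(a,a') : A \times A'$ we have $\mu(a) : B(a) \to Y(m\,a)$ and $\nu(a') : B'(a') \to Y'(n\,a')$, so the functorial action of the join produces $\mu(a) \join \nu(a') : B(a) \join B'(a') \to Y(m\,a) \join Y'(n\,a')$. Since $(m \times n)(a,a') = (m\,a, n\,a')$, this codomain is exactly $\bigl((X,Y) \lprod (X',Y')\bigr)$ evaluated at $(m\times n)(a,a')$, so $(m \times n,\ (a,a') \mapsto \mu(a) \join \nu(a'))$ is a well-formed arrow from $(A,B) \lprod (A',B')$ to $(X,Y) \lprod (X',Y')$, as required.

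For the functor laws, the key point is that both components of the definition are ``componentwise functorial'': the first component is the cartesian product of maps, which is a definitionally strict wild bifunctor on the wild category of types ($\id_A \times \id_{A'} = \id_{A \times A'}$ and $(g \times g') \circ (f \times f') = (g\circ f) \times (g'\circ f')$ both hold on the nose by $\eta$), and the second component is, pointwise in $(a,a')$, the functorial action of the join, which — as noted above — makes the join a wild endofunctor on the wild category of types. Hence preservation of identities reduces to the definitional equality of the first components together with the identifications $\id_{B(a)} \join \id_{B'(a')} = \id_{B(a) \join B'(a')}$ coming from functoriality of the join, assembled into a single dependent identification by function extensionality; likewise, preservation of composition reduces to strictness of $\times$ on composites together with the identifications $(g \circ f) \join (g' \circ f') = (g \join g') \circ (f \join f')$, again assembled pointwise via function extensionality. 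Because the ambient structure of $\Fam$ (and of $\Fam \times \Fam$) is definitional, there is no further coherence bookkeeping; the two wild-functor identifications for $\lprod$ are obtained directly from the corresponding ones for $\join$.

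A remark on alternatives: one might try to transport functoriality of $\lprod$ from $\Map$ along the isomorphism $\chi$ of \cref{Map-Fam-equivalent} (using \cref{preservation-of-Leibniz-product}), but establishing that the pushout-product is a wild functor on $\Map$ is itself precisely the kind of coherence-heavy task the family picture is meant to avoid, so the direct argument above is preferable. The only genuine content is therefore the functoriality of the join on identities and composition — which, since the join is a pushout, is proved by pushout induction with some care for the path/glue component in the cubical setting — and that is exactly the ingredient we are entitled to assume here. I expect that this join-functoriality is the real work; everything downstream in the present proposition is a routine function-extensionality assembly, so I would not belabor it.
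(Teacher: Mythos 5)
Your proof is correct and matches the intended argument: the paper states this proposition without a written proof (deferring to the formalization), and the direct verification you give---definitional functoriality of $\times$ on the first components plus wild functoriality of $\join$ on the second, assembled pointwise by function extensionality---is exactly the content. The only genuine ingredient is, as you say, that the join preserves identities and composites, which the paper has already granted by noting that the join extends to a wild endofunctor on the wild category of types.
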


\begin{definition}[\flink{Definition-3-8} Constant maps, \(\const f\)]
  For a map \(f : A \to B\), we write
  \[
    \const f\defeq \SigmaT{b : B}{\PiT{a : A}{f\,a = b}}
  \]
  for the type of witnesses that \(f\) is constantly a designated point.
\end{definition}

\begin{lemma}[\flink{Lemma-3-9}]\label{total-space-of-const}
  The total space of the \(\const\) family is equivalent to the codomain of the
  map in question, i.e.\ for types \(A\) and \(B\), we have
  \[
    \SigmaT{f : A \to B}{\const f} \simeq B,
  \]
  given by \((f,b,\sbullet) \mapsto b\).\qedNoProof
\end{lemma}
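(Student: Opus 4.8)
The plan is to prove this by repeatedly applying the two basic equivalences recalled in the preliminaries. First I would reassociate and swap the $\sum$s to obtain
\[
  \SigmaT{f : A \to B}{\const f}
  = \SigmaT{f : A \to B}{\SigmaT{b : B}{\PiT{a : A}{f\,a = b}}}
  \simeq \SigmaT{b : B}{\SigmaT{f : A \to B}{\PiT{a : A}{f\,a = b}}},
\]
which is harmless since $\sum$ is symmetric up to equivalence. It then suffices to show that the inner type $\SigmaT{f : A \to B}{\PiT{a : A}{f\,a = b}}$ is contractible for every $b : B$, because a $\sum$ over $B$ of a contractible family is equivalent to $B$ via its first projection, and that first projection is precisely the assignment $(f,b,\sbullet) \mapsto b$ from the statement.

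To see that $\SigmaT{f : A \to B}{\PiT{a : A}{f\,a = b}}$ is contractible, I would apply distributivity of $\prod$ over $\sum$ (the ``type-theoretic axiom of choice'', in its nondependent form with $Y(a,c) \defeq (c = b)$) to get
\[
  \SigmaT{f : A \to B}{\PiT{a : A}{f\,a = b}}
  \simeq \PiT{a : A}{\SigmaT{c : B}{c = b}}.
\]
The type $\SigmaT{c : B}{c = b}$ is contractible by contractibility of singletons (with centre $(b, \refl)$), so the right-hand side is a product of contractible types and hence contractible.

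Finally I would check that, unwinding the equivalences above, the composite indeed sends $(f,b,\sbullet)$ to $b$, matching the map given in the statement. I do not expect any step to be a genuine obstacle: the only thing to be careful about is that the three equivalences used compose to the specified map on the nose, but since each of them is definitionally transparent on the component that ends up being the first projection to $B$, this is routine. (Alternatively, one could prove the statement directly by showing that the displayed map has contractible fibres, reducing by contractibility of singletons to exactly the same contractibility claim about constant maps; I would present the $\sum$-reordering version as it keeps the bookkeeping minimal.)
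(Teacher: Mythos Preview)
Your proof is correct and is exactly the intended argument: the paper states the lemma without proof (marked by the bare \qedsymbol), relying on the two standard equivalences from the preliminaries, which is precisely what you invoke.
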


The pullback-hom on type families is more involved and can be calculated
by computing the fibers of the pullback-hom of two projection maps. Here,
we directly give the definition and then justify it via
\cref{preservation-of-Leibniz-exponential}.
\begin{definition}[\flink{Definition-3-10} Pullback-hom of families, \((A,B) \lexp (X,Y)\)]
  The \emph{pullback-hom} of two type families \(B : A \to \U\) and
  \({Y : X \to \U}\) is the type family indexed by
  \[
    \pa*{(A,B) \lexp (X,Y)}_0 \defeq
    \Fam((A,B);(X,Y))
  \]
  and which given an element \((m,\mu)\) of this type returns the type
  \[
    \pa*{(A,B) \lexp (X,Y)}_1(m,\mu) \defeq
    \PiT{a : A}{\const(\mu\,a)}.
  \]
\end{definition}

The following observation is not used anywhere but serves to illustrate the
connection to function types.
\begin{lemma}[\flink{Lemma-3-11}]\label{total-space-of-Leibniz-exponential}
  The total space of \((A,B) \lexp (X,Y)\) is equivalent to \(\pa*{\sum Y}^A\).
\end{lemma}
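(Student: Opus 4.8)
The plan is to unfold all definitions and then massage the resulting iterated sums and products using nothing more than distributivity of $\prod$ over $\sum$, contractibility of singletons, and function extensionality. Writing out the pullback-hom of families, an element of the total space of $(A,B) \lexp (X,Y)$ consists of a map $m : A \to X$, a family $\mu : \PiT{a : A}{\pa*{B\,a \to Y(m\,a)}}$, and, for each $a : A$, an element of $\const(\mu\,a)$; unfolding this last component via the definition of $\const$, the total space is
\[
  \textstyle\sum_{m : A \to X}\ \sum_{\mu : \prod_{a : A}(B\,a \to Y(m\,a))}\ \prod_{a : A}\ \sum_{y : Y(m\,a)}\ \prod_{b : B\,a}(\mu\,a\,b = y).
\]

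First I would apply distributivity of $\prod$ over $\sum$ to the subterm $\prod_{a : A}\sum_{y : Y(m\,a)}\prod_{b : B\,a}(\mu\,a\,b = y)$, rewriting it as $\sum_{k : \prod_{a : A}Y(m\,a)}\ \prod_{a : A}\prod_{b : B\,a}(\mu\,a\,b = k\,a)$. Since the type of $k$ depends on $m$ but not on $\mu$, I can commute and reassociate the $\sum$s so that $k$ is introduced right after $m$ and before $\mu$, giving
\[
  \textstyle\sum_{m : A \to X}\ \sum_{k : \prod_{a : A}Y(m\,a)}\ \sum_{\mu : \prod_{a : A}(B\,a \to Y(m\,a))}\ \prod_{a : A}\prod_{b : B\,a}(\mu\,a\,b = k\,a).
\]
The key observation is that the inner two factors, $\sum_{\mu}\prod_{a}\prod_{b}(\mu\,a\,b = k\,a)$, form a contractible type: by function extensionality the predicate $\prod_{a}\prod_{b}(\mu\,a\,b = k\,a)$ is equivalent to $\mu = \lambdadot{a}{\lambdadot{b}{k\,a}}$, so this $\sum$ is a singleton and hence contractible.

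Discarding the contractible factor leaves $\sum_{m : A \to X}\prod_{a : A}Y(m\,a)$, which by distributivity of $\prod$ over $\sum$ in the other direction is equivalent to $\prod_{a : A}\sum_{x : X}Y(x)$, that is, to $\pa*{\sum Y}^A$. Composing the equivalences collected along the way yields the claim.

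I do not expect a genuine obstacle here; the only points requiring care are the middle reshuffling step — which is just associativity and commutativity of $\sum$ — and recording that the singleton argument relies on function extensionality, which is available from univalence. As an alternative route one could instead read off the answer from the description of $(A,B) \lexp (X,Y)$ in terms of the pullback-hom of the two projection maps $\textstyle\sum B \to A$ and $\textstyle\sum Y \to X$ together with \cref{Map-Fam-equivalent}, since the total space of a family is the domain of its image under $\chi^{-1}$; but the self-contained computation above is shorter.
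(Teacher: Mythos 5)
Your proof is correct and follows essentially the same route as the paper: unfold the total space, shuffle $\Sigma$ and $\Pi$ via distributivity, collapse the $\mu$-component by a singleton/contractibility argument, and finish with distributivity again to obtain $\pa*{\sum Y}^A$. The only cosmetic difference is that the paper packages the middle step as a pointwise application of \cref{total-space-of-const} (whose proof is exactly your funext-plus-singletons argument), whereas you inline it.
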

\begin{proof}
  Writing \(E \defeq (A,B) \lexp (X,Y)\), we note that
  \begin{align*}
    &\SigmaT{(m,\mu) : E_0}{E_1(m,\mu)} \\
    &\equiv
      \SigmaT{(m,\mu) : E_0}{\const(\mu\,a)} \\
    &\simeq
      \SigmaT{m : A \to X}{%
      \PiT{a : A}{%
      \SigmaT{\phi : B(a) \to Y(m\,a)}{\const(\phi)}}}
    \\
    &\simeq \SigmaT{m : A \to X}{\PiT{a : A}{Y(m\,a)}} &\text{(by \cref{total-space-of-const})} \\
    &\simeq \pa*{A \to \sum Y}. &&\qedhere
  \end{align*}
\end{proof}

\begin{proposition}[\flink{Proposition-3-12}]\label{preservation-of-Leibniz-exponential}
  The maps \(\chi\) and \(\chi^{-1}\) from \eqref{fiber-correspondence} preserve
  the pullback-hom.
\end{proposition}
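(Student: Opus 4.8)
The plan is to follow the pattern of the proof of \cref{preservation-of-Leibniz-product}: since $\chi$ is an equivalence of types, it suffices to show that $\chi$ carries $f \lexp g$ to $(\chi f) \lexp (\chi g)$, because applying $\chi^{-1}$ to this identity (and using $\chi^{-1} \circ \chi = \id$ and $\chi \circ \chi^{-1} = \id$) then yields the statement for $\chi^{-1}$. So I would fix maps $f : A \to B$ and $g : X \to Y$. By \eqref{Leibniz-exponential}, the object $f \lexp g$ of $\Map$ has codomain the pullback $X^A \times_{B^A} Y^B$ and underlying function the gap map $h \mapsto (h \circ f,\ g \circ h,\ \refl)$, with $\refl$ as coherence because associativity of composition holds definitionally. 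Hence $\chi(f \lexp g)$ is, as an object of $\Fam$, the pair $\bigl(X^A \times_{B^A} Y^B,\ \xi \mapsto \fiber_{f \lexp g}(\xi)\bigr)$, while, by the definition of the pullback-hom on families, $(\chi f) \lexp (\chi g) = (B, \fiber_f) \lexp (Y, \fiber_g)$ is the pair $\bigl(\Fam((B, \fiber_f); (Y, \fiber_g)),\ (m, \mu) \mapsto \PiT{b : B}{\const(\mu\, b)}\bigr)$. To identify these two objects of $\Fam$ it then suffices, by univalence and function extensionality, to produce an equivalence $\theta$ between the two indexing types together with a fiberwise equivalence of the two families over $\theta$.

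For the indexing types I would note that $X^A \times_{B^A} Y^B$ is, after reassociating, exactly the type $\SigmaT{u : A \to X}{\SigmaT{v : B \to Y}{g \circ u = v \circ f}}$ of commutative squares from $f$ to $g$, that is, the hom-type $\Map(f; g)$; the required $\theta$ is then the action of the isomorphism of \cref{Map-Fam-equivalent} on this hom-type, which gives $\Map(f; g) \simeq \Fam((B, \fiber_f); (Y, \fiber_g))$. Tracing through that isomorphism, a square $\xi = (u, v, \alpha)$ is sent to $\theta(\xi) = (v, \mu_\xi)$ where $\mu_\xi(b)$ takes $(a, q : f\, a = b) : \fiber_f(b)$ to $\bigl(u\, a,\ \alpha_a \cdot \ap_v q\bigr) : \fiber_g(v\, b)$, writing $\alpha_a : g(u\, a) = v(f\, a)$ for the pointwise form of $\alpha$.

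The substance of the proof is matching the fibers. Starting from $\fiber_{f \lexp g}(u, v, \alpha) \equiv \SigmaT{h : B \to X}{(h \circ f,\ g \circ h,\ \refl) = (u, v, \alpha)}$, I would characterize the iterated $\Sigma$-equality, bundle ``$h : B \to X$ together with the identification $g \circ h = v$'' into a single $\zeta : \PiT{b : B}{\fiber_g(v\, b)}$ and ``the identification $h \circ f = u$ together with the transported-reflexivity condition'' into a function over $A$ (both steps instances of distributivity of $\prod$ over $\sum$), and finally recognize the per-point datum $\SigmaT{e : \fst(\zeta(f\, a)) = u\, a}{\snd(\zeta(f\, a)) = \ap_g(e) \cdot \alpha_a}$ as precisely an identification $\zeta(f\, a) = (u\, a, \alpha_a)$ in $\fiber_g(v(f\, a))$, arriving at
\[
  \fiber_{f \lexp g}(u, v, \alpha) \;\simeq\; \SigmaT{\zeta : \PiT{b : B}{\fiber_g(v\, b)}}{\PiT{a : A}{\bigl(\zeta(f\, a) = (u\, a, \alpha_a)\bigr)}}.
\]
On the other side, $\PiT{b : B}{\const(\mu_\xi\, b)}$ unfolds to $\PiT{b : B}{\SigmaT{z : \fiber_g(v\, b)}{\PiT{(a, q) : \fiber_f(b)}{\bigl(\mu_\xi(b)(a, q) = z\bigr)}}}$; distributing $\prod$ over $\sum$ once more extracts the same $\zeta$, and the remaining condition $\PiT{b : B}{\PiT{(a, q) : \fiber_f(b)}{\bigl(\mu_\xi(b)(a, q) = \zeta(b)\bigr)}}$ collapses, after swapping the quantifiers over $A$ and $B$ and contracting the singleton $\SigmaT{b : B}{f\, a = b}$ at $(f\, a, \refl)$, to $\PiT{a : A}{\bigl((u\, a, \alpha_a) = \zeta(f\, a)\bigr)}$, using that $\mu_\xi(f\, a)(a, \refl) = (u\, a, \alpha_a)$ since $\ap_v \refl = \refl$. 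Reversing paths in each fiber matches this with the displayed type above, so the fiberwise equivalence holds, and together with $\theta$ this proves $\chi(f \lexp g) = (\chi f) \lexp (\chi g)$.

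I expect the fiber computation to be the only real obstacle. Unfolding $\fiber_{f \lexp g}$ forces one to transport the reflexivity coherence of the gap map simultaneously along identifications of the ``top'' map $h \circ f$ and the ``bottom'' map $g \circ h$ of a square, and then to verify that the result agrees with the square's given $2$-cell $\alpha$; this is exactly the kind of coherence bookkeeping that passing to $\Fam$ is meant to make manageable, but which still has to be carried out once here. It is also the reason there is no statement for the pullback-hom quite as quotable as the fiberwise-join description of the pushout-product in \cref{Leibniz-product-is-fiberwise-join}.
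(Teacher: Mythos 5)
Your proposal is correct in outline, but it takes the opposite---and noticeably more laborious---direction from the paper. The paper reduces to showing that $\chi^{-1}$ preserves the pullback-hom: starting from families with projections $p$ and $q$, it computes the codomain of $p \lexp q$ (the pullback $\bullet$ in \eqref{Leibniz-exponential}) and identifies it with $\Fam((A,B);(X,Y))$, identifies the domain $\pa*{\sum Y}^A$ with the total space of the family-level pullback-hom via \cref{total-space-of-Leibniz-exponential}, and observes that the resulting square with $\fst$ commutes definitionally---no fibers of a gap map are ever computed. You instead reduce to $\chi$, and therefore must characterize $\fiber_{f\lexp g}(u,v,\alpha)$ for arbitrary $f$ and $g$, which forces you through the identity type of the triple $(h\circ f,\, g\circ h,\, \refl) = (u,v,\alpha)$; this is exactly the transport-of-a-2-cell bookkeeping that the paper's choice of direction is designed to sidestep. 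Your computation does go through: the characterization of paths in fibers, the distributivity of $\prod$ over $\sum$, and the singleton contraction at $(f\,a,\refl)$ are all standard, and the result matches $\PiT{b : B}{\const(\mu_\xi\,b)}$ as you claim. As a by-product your argument yields a fiberwise description of the pullback-hom analogous to \cref{Leibniz-product-is-fiberwise-join}, which the paper deliberately declines to state. The trade-off is that your route is more self-contained and explains where the definition of $\lexp$ on $\Fam$ comes from, whereas the paper's route is shorter because, for projection maps, both the domain and the codomain of the pullback-hom admit direct non-fiberwise descriptions.
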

\begin{proof}
  Since \(\chi\) is an equivalence, it suffices to prove this for its inverse~\(\chi^{-1}\).
  To this end, suppose we have type families \(B : A \to \U\) and
  \(Y : X \to \U\), and write
  \[
    p : \textstyle\sum B \to A \quad\text{and}\quad q : \textstyle\sum Y \to X
  \]
  for the first projections.
  Following~\eqref{Leibniz-exponential}, we consider
  \[
    \begin{tikzcd}[column sep=.5cm]
      {\pa*{\sum Y}}^A
      \ar[dr,dashed,"p \lexp q"]
      \ar[drr,bend left=20,"{\pa*{\sum Y}}^p"]
      \ar[ddr,bend right,"q^A"]
      \\
      &
      \bullet
      \pbcorner
      \ar[d] \ar[r]
      & {\pa*{\sum Y}}^{\sum B} \ar[d,"q^{\sum B}"]
      \\
      & X^A \ar[r,"X^p"]
      & X^{\sum B}
    \end{tikzcd}
  \]
  Let us abbreviate \((A,B) \lexp (X,Y)\) by \(E\) again.
  Then we notice that
  \begin{align*}
    \bullet &\equiv \SigmaT{l : \sum B \to \sum Y}{\SigmaT{r : A \to X}{q \circ l = r \circ p}} \\
            &\simeq \SigmaT{f : A \to X}{\PiT{a : A}{\PiT{b : Ba}{\SigmaT{x : X}{\SigmaT{y : Yx}{{f\,a} = x}}}}} \\
            &\simeq \Fam((A,B);(X,Y)) \\
            &\equiv E_0.
  \end{align*}
  Moreover, this equivalence fits in a diagram, where the leftmost map is given
  by \cref{total-space-of-Leibniz-exponential},
  \[
    \begin{tikzcd}
      \pa*{\sum Y}^A \ar[r,"p \lexp q"] \ar[d,"\simeq"] & \bullet \ar[d,"\simeq"] \\
      \sum E_1 \ar[r,"\fst"] & E_0
    \end{tikzcd}
  \]
  that commutes definitionally.
  Hence, 
  \(\pa*{\pa*{\sum Y}^A,\bullet,p \lexp q} = \pa*{\sum E_1 , E_0 , \fst}\), as
  desired.
\end{proof}

\begin{proposition}[\flink{Proposition-3-13}]
  The pullback-hom on \(\Fam\) is functorial in both arguments:
  contravariant in the first and covariant in the second.
\end{proposition}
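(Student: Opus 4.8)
The plan is to construct the action on arrows directly on $\Fam$, in the same spirit as \cref{Leibniz-product-functor}. Fix arrows $(m,\mu) : \Fam((A,B);(A',B'))$ and $(n,\nu) : \Fam((X,Y);(X',Y'))$; contravariance in the first argument and covariance in the second mean that we must produce an arrow
\[
  (m,\mu) \lexp (n,\nu) : \Fam\pa*{(A',B') \lexp (X,Y);\ (A,B) \lexp (X',Y')}.
\]
Its level-$0$ component is a map $\Fam((A',B');(X,Y)) \to \Fam((A,B);(X',Y'))$, and we define it by sending $(p,\pi)$ to the composite $(n,\nu) \circ (p,\pi) \circ (m,\mu)$ in $\Fam$, explicitly $(n \circ p \circ m,\ \lambdadot{a}{\nu(p(m\,a)) \circ \pi(m\,a) \circ \mu\,a})$. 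Its level-$1$ component must give, for each such $(p,\pi)$, a map $\PiT{a' : A'}{\const(\pi\,a')} \to \PiT{a : A}{\const(\pi'\,a)}$ where $\pi'\,a \defeq \nu(p(m\,a)) \circ \pi(m\,a) \circ \mu\,a$. For this we use the two evident functoriality properties of $\const$: precomposing a map that is constantly $b$ with an arbitrary map is again constantly $b$, and postcomposing such a map by $h$ yields a map that is constantly $h\,b$, via $\ap_h$ of the pointwise identifications. Concretely, given $s : \PiT{a' : A'}{\const(\pi\,a')}$ and writing $s(m\,a) = (y_0, h_0)$, we return $\lambdadot{a}{(\nu(p(m\,a))\,y_0,\ \lambdadot{b}{\ap_{\nu(p(m\,a))}(h_0(\mu\,a\,b))})}$.

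It then remains to check that this assignment preserves identities and composition; there is nothing to verify about the associativity and unit witnesses, since we use the weaker notion of wild functor of \cref{footnote:wild-functor}. This is routine and in fact largely definitional: on level $0$ the two laws unfold to the corresponding laws for composition in $\Fam$, which hold definitionally, and on level $1$ they unfold to instances of $\ap_{\id} \equiv \id$ and $\ap_{g \circ f} \equiv \ap_g \circ \ap_f$ together with the elementary naturality of the $\const$-witness construction. I expect the only real obstacle to be the careful bookkeeping of this level-$1$ data — the $\const$-witnesses carry genuine path data — but since all the identifications involved are assembled from $\refl$ and from $\ap$ of definitional equalities, they do not accumulate complexity.

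There is also a more conceptual route that avoids the explicit computation. By \cref{Map-Fam-equivalent} the map $\chi^{-1}$ is an isomorphism of wild categories $\Fam \to \Map$, and by \cref{preservation-of-Leibniz-exponential} it (together with $\chi$) preserves the pullback-hom, so a wild-bifunctor structure for $\lexp$ on $\Map$ transports along $\chi^{-1}$ to one on $\Fam$; this is the kind of transfer discussed in \cref{proto-wild-cats}. This requires separately establishing that $\lexp$ on $\Map$ is a wild bifunctor — contravariant in its first argument and covariant in its second — which one obtains by unwinding the Leibniz construction for the function-type bifunctor $(-) \to (-)$. I would present the direct construction as the primary argument, since it matches the treatment of the pushout-product and yields explicit formulas, and use the transfer route as a consistency check.
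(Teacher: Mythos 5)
Your proposal is correct and takes essentially the same approach as the paper: a direct construction of the action on arrows in \(\Fam\), with the same level-0 formula (pre-/post-composition) and the same level-1 manipulation of the \(\const\)-witnesses (reindexing the designated point along the first argument's arrow and applying \(\ap\) of the second argument's arrow to the constancy paths). The only difference is presentational — the paper gives the two unary actions (precomposition in the first argument, postcomposition in the second) separately rather than as a single combined bifunctorial action.
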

\begin{proof}
  Given \((m,\mu) : \Fam((A,B);(A',B'))\), we define an element of
  \[
    \Fam((A',B') \lexp (X,Y);(A,B) \lexp (X,Y))
  \]
  essentially via precomposition with \((m,\mu)\), i.e.\ it is given by
  \begin{align*}
    \Fam((A',B');(X,Y)) &\to \Fam((A,B);(X,Y)) \\
    (n,\nu) &\mapsto (n \circ m,
              \lambdadot{a : A}{\lambdadot{b : B\,a}{\nu\,(m\,a)\,(\mu\,a\,b)} })
  \end{align*}
  and by mapping
  \[
    c : \PiT{a' : A'}{\const(\nu\,a')} \equiv
    \PiT{a' : A'}{\SigmaT{y : Y(n\,a')}
      {\PiT{b' : B'\,a'}{\nu\,a'\,b' = y}}}
  \]
  to
  \[
    a \mapsto
    \pa*{\fst(c\,(m\,a))
      ,
      \lambdadot{b : B\,a}{\snd(c\,(m\,a))\,(\mu\,a\,b)}
    }
  \]
  which is of type
  \(\PiT{a : A}\const(\lambdadot{b : B\,a}{\nu\,(m\,a)\,(\mu\,a\,b)})\).

  Secondly, given \((m,\mu) : \Fam((X,Y);(X',Y'))\), we define an element of
  \[
    \Fam((A,B) \lexp (X,Y);(A,B) \lexp (X',Y'))
  \]
  essentially via postcomposition with \((m,\mu)\), i.e.\ it is given by
  \begin{align*}
    \Fam((A,B);(X,Y)) &\to \Fam((A,B);(X',Y')) \\
    (n,\nu) &\mapsto (m \circ n,
              \lambdadot{a : A}{\lambdadot{b : B\,a}{\mu\,(n\,a)\,(\nu\,a\,b)} })
  \end{align*}
  and given
  \( c : \PiT{a : A}{\const(\nu\,a)}
  \),
  we observe that if we write \(y_a\) for \(\fst(c(m\,a))\), then \(\nu\,a\) is
  constantly \(y_a\), so that \(\mu\,(n\,a)\,(\nu\,a\,-)\) is constantly
  \(\mu\,(n\,a)\,y\). This gives us an element of the desired type
  \(\PiT{a : A}\const(\lambdadot{b : B\,a}{\mu\,(n\,a)\,(\nu\,a\,b)})\).
\end{proof}

\subsection{The pushout-product and pullback-hom as adjoints}

\begin{lemma}[\flink{Lemma-3-14}]\label{Leibniz-product-associative-commutative}
  The pushout-product is associative and commutative, i.e.\ for any maps we have
  \[
    (f \Ltimes g) \Ltimes h = f \Ltimes (g \Ltimes h)
    \quad\text{and}\quad
    (f \Ltimes g) = (g \Ltimes f).
  \]
\end{lemma}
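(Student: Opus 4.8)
The plan is to transport both equations along the isomorphism $\chi\colon\Map\xrightarrow{\sim}\Fam$ of \cref{Map-Fam-equivalent}. Since $\chi$ is an equivalence on objects, it is injective on identifications, so it suffices to prove the two equations after applying $\chi$; and by \cref{preservation-of-Leibniz-product} the functor $\chi$ carries the pushout-product of maps to the pushout-product of families. This reduces the lemma to proving, for all type families $B\colon A\to\U$, $Y\colon X\to\U$ and $W\colon Z\to\U$, the identities
\[
  \bigl((A,B)\lprod(X,Y)\bigr)\lprod(Z,W)=(A,B)\lprod\bigl((X,Y)\lprod(Z,W)\bigr)
  \qquad\text{and}\qquad
  (A,B)\lprod(X,Y)=(X,Y)\lprod(A,B)
\]
in the wild category $\Fam$.

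I would then unfold the definition of $\lprod$ on $\Fam$, so that each side of each identity is an explicit pair in $\SigmaT{Z:\U}{(Z\to\U)}$ whose first component is an iterated cartesian product of index types and whose second component sends an index to the corresponding iterated join of fibers. By the standard characterization of identity types of $\Sigma$-types, together with univalence and function extensionality, an identification of two such pairs amounts to an equivalence of the index types and a fiberwise equivalence of the families over it. For commutativity I would use the symmetry equivalence $A\times X\simeq X\times A$ on indices and, fiberwise, the commutativity of the join $B(a)\join Y(x)\simeq Y(x)\join B(a)$; for associativity I would use the associativity equivalence $(A\times X)\times Z\simeq A\times(X\times Z)$ and, fiberwise, the associativity of the join $\bigl(B(a)\join Y(x)\bigr)\join W(z)\simeq B(a)\join\bigl(Y(x)\join W(z)\bigr)$.

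What remains are the corresponding facts for cartesian products and for joins. The statements about $\times$ are standard consequences of univalence, and commutativity of the join is immediate, since $A\join B$ and $B\join A$ are the pushouts of the two spans $A\xleftarrow{\fst}A\times B\xrightarrow{\snd}B$ and $B\xleftarrow{\fst}B\times A\xrightarrow{\snd}A$, which are identified by the symmetry of $A\times B$. The one genuinely nontrivial ingredient, which I expect to be the main obstacle, is the associativity of the join $(A\join B)\join C\simeq A\join\bigl(B\join C\bigr)$; this is a known fact about the join construction (cf.~\cite{Rijke2017}) that is available in our formalization, and I would simply invoke it. It is worth stressing that the detour through $\Fam$ is what keeps the argument manageable: an identification of objects of $\Fam$ is just an identification of pairs, so no coherence data relating an iterated pushout-product to a pushout-of-a-pushout ever has to be produced---the bookkeeping one would otherwise be forced to confront by arguing directly in $\Map$.
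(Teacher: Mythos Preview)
Your proposal is correct and follows essentially the same route as the paper: transfer the question to \(\Fam\) via \cref{preservation-of-Leibniz-product} (using that \(\chi\) is an equivalence on objects), then invoke associativity and commutativity of \(\times\) and of the join. The only cosmetic differences are that the paper cites Brunerie's thesis rather than \cite{Rijke2017} for join associativity, and remarks that commutativity may alternatively be checked directly in \(\Map\).
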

\begin{proof}
  By \cref{preservation-of-Leibniz-product} it suffices to check this equation
  in \(\Fam\), rather than in \(\Map\). But in \(\Fam\) this follows directly
  from the associativity of the cartesian product (\(\times\)) and the
  join~(\(\join\))~\cite[Proposition~1.8.6]{Brunerie2016}.

  Commutativity follows similarly, as the cartesian product and join are
  commutative. Alternatively, as in the formalization, one may check this
  directly in \(\Map\).
\end{proof}

\begin{lemma}[\flink{Lemma-3-15}]\label{join-and-constancy}
  For types \(X\), \(Y\) and \(Z\) we have equivalences
  \[
    (X \join Y \to Z)
    \;\simeq\; \SigmaT{f : X \to Z}{(Y \to \const f)}
    \;\simeq\; \SigmaT{g : Y \to Z}{(X \to \const g)}.
  \]
\end{lemma}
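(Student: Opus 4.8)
The plan is to identify all three types with the type of cocones out of the pushout span that defines the join, by unfolding $\const$ and repeatedly applying the ``distributivity of $\prod$ over $\sum$'' from the preliminaries together with the recursion principle of the join. First I would recall the universal property of $X \join Y$, which is the pushout of $X \xleftarrow{\fst} X \times Y \xrightarrow{\snd} Y$: for every type $Z$, precomposition with the cocone maps is an equivalence
\[
  (X \join Y \to Z) \;\simeq\; \SigmaT{f : X \to Z}{\SigmaT{g : Y \to Z}{\PiT{p : X \times Y}{f(\fst p) = g(\snd p)}}}.
\]

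Next I would massage the second type. Unfolding the definition of $\const$, we have $Y \to \const f \equiv \PiT{y : Y}{\SigmaT{z : Z}{\PiT{x : X}{f(x) = z}}}$, and the (nondependent) type-theoretic axiom of choice turns this into $\SigmaT{g : Y \to Z}{\PiT{y : Y}{\PiT{x : X}{f(x) = g(y)}}}$. Performing this equivalence underneath the outer $\sum$ over $f$ shows that $\SigmaT{f : X \to Z}{(Y \to \const f)}$ is equivalent to $\SigmaT{f : X \to Z}{\SigmaT{g : Y \to Z}{\PiT{y : Y}{\PiT{x : X}{f(x) = g(y)}}}}$. Since swapping the order of the two $\prod$-binders and uncurrying $\PiT{x : X}{\PiT{y : Y}{-}} \simeq \PiT{p : X \times Y}{-}$ are equivalences, the latter type matches the right-hand side of the universal property above, and composing all these equivalences gives $\SigmaT{f : X \to Z}{(Y \to \const f)} \simeq (X \join Y \to Z)$.

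For the third type I would argue symmetrically: unfolding $\const g$ and applying the axiom of choice gives $\SigmaT{g : Y \to Z}{\SigmaT{f : X \to Z}{\PiT{x : X}{\PiT{y : Y}{g(y) = f(x)}}}}$; reordering the two $\sum$-binders and replacing each $g(y) = f(x)$ by $f(x) = g(y)$ fiberwise (path reversal is an equivalence) yields exactly the same type as before, hence $\SigmaT{g : Y \to Z}{(X \to \const g)} \simeq (X \join Y \to Z)$ as well. Alternatively one could simply invoke commutativity of the join, $X \join Y \simeq Y \join X$, and apply the first equivalence with the roles of $X$ and $Y$ exchanged. No step here is genuinely hard; the only thing requiring care is the bookkeeping — keeping the direction of $f(x) = g(y)$ versus $g(y) = f(x)$ straight, fixing the order of the two $\prod$-binders before uncurrying, and making sure the $\sum$ over $g$ is extracted from under the $\prod$ over $y$ via the axiom of choice rather than merely transposed. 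The main obstacle, such as it is, is to state the universal property of the join in precisely the form that lines up with $\const$ after these cosmetic rearrangements; once that is set up, the three equivalences chain together mechanically.
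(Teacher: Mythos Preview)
Your proof is correct and matches the paper's argument essentially step for step: both invoke the universal property of the join, then use distributivity of $\prod$ over $\sum$ and (un)currying to reshape the cocone type into $\SigmaT{f : X \to Z}{(Y \to \const f)}$, with the other equivalence obtained symmetrically. The only cosmetic difference is that the paper runs the chain in the opposite direction (from the join's universal property toward $\const$), whereas you unfold $\const$ first and then match it to the universal property.
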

\begin{proof}
  We only spell out the details for one of the equivalences; the other is proved
  similarly. Starting with the universal property of the join, we get
  \begin{align*}
    (X \join Y \to Z)
    &\simeq
      \SigmaT{f : X \to Z}{\SigmaT{g : Y \to Z}{\PiT{(x,y) : X \times Y}{f\,x} = {g\,y}}} \\
    &\simeq
      \SigmaT{f : X \to Z}{\PiT{y : Y}{\SigmaT{z : Z}{\PiT{x : X}{f\,x} = z}}} \\
    &\equiv
      \SigmaT{f : X \to Z}{(Y \to \const f)}.&&\qedhere
  \end{align*}
\end{proof}

\begin{theorem}[\flink{Theorem-3-16} Leibniz adjunction]\label{Leibniz-adjunction-main-thm:alt}
  We have natural equivalences
  \[
    \Fam((A,B) \lprod (X,Y);(C,D))
    \simeq \Fam((A,B) ; (X,Y) \lexp (C,D)).
  \]
  Moreover, the pushout-product and pullback-hom extend to wild functors on
  \(\Map\) such that we have natural equivalences
  \[
    \Map(f \lprod g,h) \simeq \Map(f,g\lexp h).
  \]
\end{theorem}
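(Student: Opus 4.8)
The plan is to establish the $\Fam$-equivalence by a direct computation using the two standard equivalences recalled in the preliminaries together with \cref{join-and-constancy}, and then to transport the entire statement to $\Map$ along the isomorphism $\chi$ of \cref{Map-Fam-equivalent}. For the $\Fam$-statement, I would unfold both sides. An element of the left-hand side $\Fam((A,B) \lprod (X,Y);(C,D))$ is a pair $(f,\phi)$ with $f : A \times X \to C$ and $\phi : \PiT{(a,x) : A \times X}{(B(a) \join Y(x) \to D(f(a,x)))}$. Applying \cref{join-and-constancy} pointwise at each $(a,x)$, in the form that factors out the $Y(x)$-component, and then distributing $\prod$ over $\sum$, rewrites $\phi$ as a pair consisting of a dependent map $\gamma : \PiT{(a,x) : A \times X}{(Y(x) \to D(f(a,x)))}$ and a term of type $\PiT{(a,x) : A \times X}{(B(a) \to \const(\gamma(a,x)))}$. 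Currying $f$ to type $A \to (X \to C)$, reassociating the products over $A \times X$, and swapping the order of the $\Pi$-quantifiers over $B(a)$ and over $x : X$ puts the left-hand side into a shape of nested $\sum$- and $\prod$-types. On the other side, unfolding $\Fam((A,B); (X,Y)\lexp(C,D))$ with the definition of the pullback-hom of families, and applying distributivity once more to the component of type $A \to \Fam((X,Y);(C,D))$, produces exactly the same type; hence the two sides agree up to this chain of canonical equivalences.

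Naturality should be essentially automatic: each step above is one of the structural equivalences — currying, swapping $\Pi$s, distributivity of $\prod$ over $\sum$, and the universal property of the join — each of which is natural, so the composite is natural in $(A,B)$, $(X,Y)$ and $(C,D)$ with the expected variances; concretely one checks that it commutes with the (pre- and post-)composition maps that define the functorial actions of $\lprod$, $\lexp$ and $\Fam(-,-)$. For the $\Map$-statement I would then transport along $\chi$. By \cref{Map-Fam-equivalent}, $\chi$ is an isomorphism of wild categories, hence an equivalence on hom-types and, by \cref{proto-wild-cats}, an identification of the underlying proto-wild categories; by \cref{preservation-of-Leibniz-product} and \cref{preservation-of-Leibniz-exponential} it also preserves $\lprod$ and $\lexp$. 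This lets me equip $\lprod$ and $\lexp$ with their wild-functor structure on $\Map$ by transporting the corresponding $\Fam$-structures along $\chi$ (\cref{Leibniz-product-functor} and the functoriality of the pullback-hom on $\Fam$), and lets me read off the desired equivalence as the composite
\[
  \Map(f \lprod g, h) \;\simeq\; \Fam(\chi\,f \lprod \chi\,g,\, \chi\,h)
  \;\simeq\; \Fam(\chi\,f,\, \chi\,g \lexp \chi\,h) \;\simeq\; \Map(f,\, g \lexp h),
\]
where the two outer equivalences combine the hom-equivalence of $\chi$ with the preservation of $\lprod$ and $\lexp$, and the middle one is the $\Fam$-version just proved; naturality carries over since each factor is natural.

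I expect the main obstacle to be the bookkeeping in the first step. There are several nested layers of $\Pi$ and $\Sigma$, and the pullback-hom on families is itself a type family over the $\Sigma$-type $\Fam((X,Y);(C,D))$, so one has to keep careful track of which data indexes which type when applying distributivity and the join's universal property, and arrange the chain of equivalences so that the two sides match on the nose — which is what makes the final equivalence, and its naturality, clean. Conceptually nothing here is deep; it is exactly the kind of coherence bookkeeping the paper sidesteps by working in $\Fam$ instead of directly in $\Map$.
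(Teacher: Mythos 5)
Your proposal is correct and follows essentially the same route as the paper: compute the equivalence in $\Fam$ by the chain of currying, distributivity of $\prod$ over $\sum$, and \cref{join-and-constancy}, then transfer the whole statement (including the wild-functor structures) to $\Map$ along $\chi$ using \cref{Map-Fam-equivalent,preservation-of-Leibniz-product,preservation-of-Leibniz-exponential}. The only differences are minor reorderings of the same canonical equivalences.
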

\begin{proof}
  We compose the following simple equivalences, where the third equivalence comes from \cref{join-and-constancy}, to get the desired equivalence:
  \begin{align*}
    &\Fam((A,B) \lprod (X,Y);(C,D)) \\
    &\simeq
    \SigmaT{m : A \times X \to C}{%
      \PiT{(a,x) : A \times X}{\pa*{B(a) \join Y(x) \to D(m(a,x))}}} \\
    &\simeq
      \PiT{a : A}{%
      \SigmaT{k : X \to C}{%
      \PiT{x : X}{\pa*{B(a) \join Y(x) \to D(k\,x)}}}} \\
    &\simeq
      \PiT{a : A}{%
      \SigmaT{k : X \to C}{%
      \PiT{x : X}{\SigmaT{f : {Y\,x} \to D(k\,x)}(B\,a \to \const f)}}} \\
    &\simeq
      \PiT{a : A}{%
      \SigmaT{(k,\kappa) : \Fam((X,Y);(C,D))}{%
      \PiT{x : X}{\pa*{B\,a \to \const(\kappa\,x)}}}} \\
    &\simeq
      \PiT{a : A}{%
      \SigmaT{(k,\kappa) : \Fam((X,Y);(C,D))}{%
      \pa*{B\,a \to \PiT{x : X}{\const(\kappa\,x)}}}} \\
    &\simeq \Fam((A,B);(X,Y) \lexp (C,D)).
  \end{align*}

  The naturality can be checked directly.

  The second claim then follows via the isomorphism of
  \cref{Map-Fam-equivalent}, making use of
  \cref{preservation-of-Leibniz-exponential,preservation-of-Leibniz-product}.
  For instance, we can define the functorial action
  of \(\lprod\) as the composite
  \begin{align*}
    \Map(f,g) \simeq \Fam(\chi\,f,\chi\,g)
    \xrightarrow{{\lprod}}
    &\Fam(\chi\,f \lprod \chi\,h , \chi\,g  \lprod \chi\,h) \\
    \simeq
    &\Fam(\chi(f \lprod h),\chi(g \lprod h)) 
    \simeq 
              \Map(f \lprod h , g \lprod h). \qedhere
  \end{align*}
\end{proof}

This gives us an internal version of Riehl and Verity's adjunction observation \cite[Lemma 4.10]{riehl2014theory} for the wild category of types:

\begin{corollary}[\flink{Corollary-3-17}]\label{leibniz-internal-adjunction}
  For maps \(i\), \(j\) and \(f\), the objects $(i \Ltimes j) \Lexp f$ and $i \Lexp (j \Lexp f)$ are equal.
\end{corollary}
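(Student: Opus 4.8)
The plan is to run the familiar $1$-categorical argument internally: a composite of instances of the two adjunctions exhibits $(i \Ltimes j) \Lexp f$ and $i \Lexp (j \Lexp f)$ as representing the same copresheaf on $\Map$, and then to upgrade the resulting isomorphism of objects to an identification using univalence of $\Map$.

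Concretely, let $g$ be an arbitrary object of $\Map$. Applying the second natural equivalence of \cref{Leibniz-adjunction-main-thm:alt} three times together with the associativity of the pushout-product from \cref{Leibniz-product-associative-commutative}, I would assemble the chain
\begin{align*}
  \Map\pa*{g,\, (i \Ltimes j) \Lexp f}
  &\simeq \Map\pa*{g \Ltimes (i \Ltimes j),\, f} \\
  &= \Map\pa*{(g \Ltimes i) \Ltimes j,\, f} \\
  &\simeq \Map\pa*{g \Ltimes i,\, j \Lexp f} \\
  &\simeq \Map\pa*{g,\, i \Lexp (j \Lexp f)},
\end{align*}
the middle step being the identification $g \Ltimes (i \Ltimes j) = (g \Ltimes i) \Ltimes j$ of \cref{Leibniz-product-associative-commutative}. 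Each of these equivalences is natural in $g$ — the adjunction equivalences by \cref{Leibniz-adjunction-main-thm:alt}, and naturality is preserved both by transporting along the object identification and by whiskering with the wild functor $(-) \Ltimes i$ on $\Map$ — so the composite is a natural equivalence $\Map\pa*{g,\, (i \Ltimes j) \Lexp f} \simeq \Map\pa*{g,\, i \Lexp (j \Lexp f)}$.

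By the Yoneda lemma for wild categories recalled above — a family of equivalences $\C(g,A) \simeq \C(g,B)$ natural in $g$ exhibits an isomorphism $A \cong B$ — the objects $(i \Ltimes j) \Lexp f$ and $i \Lexp (j \Lexp f)$ are isomorphic in $\Map$. Since $\Map$ is univalent (this transfers along the isomorphism $\Map \cong \Fam$ of \cref{Map-Fam-equivalent} from the evident univalence of $\Fam$, as explained in \cref{proto-wild-cats}), an isomorphism of objects is an identification, and the claim follows. The only real work is the bookkeeping of naturality through the chain and recording that univalence of $\Map$ is indeed in hand; alternatively, one could bypass Yoneda by unfolding both sides as explicit objects of $\Fam$ via \cref{preservation-of-Leibniz-exponential,preservation-of-Leibniz-product} and reading the identification off the chain of equivalences proving \cref{Leibniz-adjunction-main-thm:alt}, but in either approach the substance is the composition of adjunctions and the associativity of $\Ltimes$.
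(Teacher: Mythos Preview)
Your proposal is correct and follows essentially the same route as the paper: both derive a natural equivalence of represented hom-types by three applications of the Leibniz adjunction and one use of associativity of $\Ltimes$, then invoke the wild-categorical Yoneda lemma (and, implicitly or explicitly, univalence of $\Map$) to conclude equality of the two objects. The paper also records the alternative you mention in your final sentence, namely establishing the isomorphism directly in $\Fam$ and transferring it back.
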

\begin{proof}
  This follows abstractly, using the fact that the Yoneda embedding is fully
  faithful and that such functors reflect isomorphisms. Indeed, for an arbitrary
  map \(m\), we have natural equivalences
  \begin{align*}
    \Map(m, i \Lexp (j \Lexp f)) 
    &\simeq
      \Map(m \Ltimes i , j \Lexp f)
    &&\text{(by the adjunction)}
    \\
    &\simeq
      \Map((m \Ltimes i) \Ltimes j , f)
    &&\text{(by the adjunction)} \\
    &\simeq
      \Map(m \Ltimes (i \Ltimes j) , f)
    &&\text{(by the associativity of \(\Ltimes\), \cref{Leibniz-product-associative-commutative})} \\
    &\simeq
      \Map(m , (i \Ltimes j) \Lexp f)
    &&\text{(by the adjunction)}.
  \end{align*}

  Alternatively, as recorded in the formalization, one can establish the
  relevant isomorphism in \(\Fam\) directly and then transfer it to \(\Map\).
\end{proof}

\section{Application: Orthogonality and Simplicial Type Theory}

\subsection{Orthogonality}\label{sec:orthogonality}

Important concepts in the classical development of higher category theory, and the study of spaces more generally, are those of \emph{lifting conditions} and \emph{orthogonality}.
Given the solid commuting square
\begin{equation}\label{eq:commutative-square}
	\begin{tikzcd}
		A \ar[r] \ar[d,"i"']\ar[r,"u"]  & Y \ar[d,"f"]\\
		B \ar[ur,dashed,"d"]\ar[r,"v"]  & X
	\end{tikzcd}
\end{equation}
we can consider the type of diagonal lifts, i.e.\ the type of functions $d : B \to Y$ that make the triangles commute.
Because we are in the wild categorical setting of untruncated types, we have to take the higher cells into account.

\begin{definition}[\flink{Definition-4-1} Diagonal fillers of a square; {\cite[Def.~1.43]{rijke2020modalities}}]
	A~\emph{commuting square} is an arrow in $\Map$.
	Given a commuting square~$\Gamma$, its \emph{type of diagonal fillers}
        consists of a function, proofs of commutativity of the triangles, and a proof that the triangles compose to the 
        square.
\end{definition}

Unfolding the definition, a commuting square consists of four types $A$, $B$, $X$, $Y$, and four functions $i$, $f$, $u$, $v$, as shown in \eqref{eq:commutative-square}, together with a proof of commutativity, $\gamma : v \circ i = f \circ u$.
A diagonal filler consists of quadruples $(d,\alpha,\beta,p)$, where:
\begin{itemize}
	\item $d : B \to Y$ is a function,
	\item $\alpha : u = {d \circ i}$ is a commutativity witness of the upper triangle,
	\item $\beta : {f \circ d} = v$ witnesses commutativity of the lower triangle,
	\item and $p : \ap_{f \circ \_} \alpha \cdot \ap_{\_ \circ i} \beta = \gamma$ says that $\alpha$ and $\beta$ compose to~$\gamma$.
\end{itemize}

\begin{definition}[\flink{Definition-4-2} Orthogonality, $i \orth f$]
	Let $i : A \to B$ and $f : Y \to X$ be functions.
	We say that $i$ is \emph{left orthogonal} to $f$ (and $f$ is \emph{right orthogonal} to $i$), written $i \orth f$, if every commutative square as in~\eqref{eq:commutative-square} has a contractible type of diagonal fillers.
\end{definition}

As emphasized by Riehl in a talk~\cite{RiehlContractibility}, contractibility
is the homotopically correct way of expressing uniqueness, so that we read the
above definition as ``every (such) commutative square has a unique filler''.
In classical homotopy theory, asking for unique fillers would be much too
strong; one wants filler to be unique \emph{up to homotopy} only, otherwise all
higher structure would be degenerate.
This is the key advantage of homotopy type theory where all constructions work up
to homotopy (viz.\ up to propositional identities).

\begin{lemma}[\flink{Lemma-4-3}]\label{lem:orth-vs-Lexp}
	For all maps $i$ and $f$, we have $i \orth f$ if and only if $i \Lexp f$ is an equivalence.
\end{lemma}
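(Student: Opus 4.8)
The plan is to unfold both sides and recognise the fiber of $i \Lexp f$ over a commuting square as that square's type of diagonal fillers; the equivalence then follows from the standard fact that a map is an equivalence exactly when all of its fibers are contractible.

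First I would make $i \Lexp f$ explicit via \eqref{Leibniz-exponential}. For $i : A \to B$ and $f : Y \to X$, the pullback-hom is the induced map $i \Lexp f : Y^B \to Y^A \times_{X^A} X^B$ given by $d \mapsto (d \circ i,\, f \circ d,\, \refl)$, where the codomain is the pullback of the postcomposition map $f^A : Y^A \to X^A$ along the precomposition map $X^i : X^B \to X^A$, so that $Y^A \times_{X^A} X^B \equiv \SigmaT{u : A \to Y}{\SigmaT{v : B \to X}{(f \circ u = v \circ i)}}$. Unfolding, an element of this codomain is exactly a commuting square (in the sense of the definition of diagonal fillers) whose left and right edges are the fixed maps $i$ and $f$, up to reversing the commutativity witness, which is harmless. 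Equivalently, $i \Lexp f$ being an equivalence says that the square of pre- and post-composition maps relating $Y^B$, $Y^A$, $X^B$, and $X^A$ is a pullback — another familiar phrasing of orthogonality.

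The heart of the argument is the computation of the fibers of $i \Lexp f$. Fixing a commuting square $\Gamma = (u, v, \gamma)$ in the codomain, we have $\fiber_{i \Lexp f}(\Gamma) \equiv \SigmaT{d : B \to Y}{\big((d \circ i,\, f \circ d,\, \refl) = (u, v, \gamma)\big)}$. By the standard characterisation of identity types in iterated $\Sigma$-types, a path $(d \circ i, f \circ d, \refl) = (u, v, \gamma)$ decomposes into a triple $(\alpha, \beta, p)$ with $\alpha : d \circ i = u$, $\beta : f \circ d = v$, and a coherence $p$ identifying the transport of $\refl$ along $(\alpha, \beta)$ in the family $(u, v) \mapsto (f \circ u = v \circ i)$ with $\gamma$. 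Computing that transport — which turns $\refl$ into a composite of $\ap_{f \circ (-)}\alpha$ and $\ap_{(-) \circ i}\beta$, up to inversion — shows that $p$ is precisely the statement that $\alpha$ and $\beta$ paste together to $\gamma$. After reparametrising $\alpha$ by its inverse so that its type reads $u = d \circ i$, this matches the definition of a diagonal filler of $\Gamma$ on the nose, yielding an equivalence $\fiber_{i \Lexp f}(\Gamma) \simeq \big(\text{type of diagonal fillers of } \Gamma\big)$.

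With this in hand the lemma is immediate: $i \Lexp f$ is an equivalence if and only if $\fiber_{i \Lexp f}(\Gamma)$ is contractible for every $\Gamma$, if and only if every commuting square with vertical edges $i$ and $f$ has a contractible type of diagonal fillers, if and only if $i \orth f$. The only real obstacle I anticipate is the bookkeeping in the fiber computation: matching the direction of $\alpha$ with the convention in the definition of diagonal fillers, and carefully unfolding the transport in the dependent family $(u, v) \mapsto (f \circ u = v \circ i)$ so that the resulting coherence lines up exactly with the pasting condition $p$. Everything else is formal.
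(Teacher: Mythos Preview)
Your proposal is correct and follows exactly the paper's approach: identify the codomain of $i \Lexp f$ with the type of commuting squares $\Map(i,f)$, identify each fiber with the corresponding type of diagonal fillers, and invoke the characterization of equivalences as maps with contractible fibers. The paper's proof is just a terser version of what you wrote, leaving the fiber computation and the path-direction bookkeeping implicit.
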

\begin{proof}
	Given $i$ and $f$, the type of squares is given by \(\Map(i,f)\); this is precisely the codomain of $i \Lexp f$.
	Every square having contractible type of fillers is equivalent to asking every fiber of $i \Lexp f$ to be contractible, which means that $i \Lexp f$ is an equivalence.
\end{proof}

\begin{lemma}[\flink{Lemma-4-4}]\label{lem:equivs-are-orth}
	Equivalences are left- and right-orthogonal to any map. That is, if $e$ is an equivalence and $g$ an arbitrary function, then $e \orth g$ and $g \orth e$.
\end{lemma}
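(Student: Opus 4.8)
The plan is to reduce the statement to \cref{lem:orth-vs-Lexp}: since $i \orth f$ holds exactly when $i \Lexp f$ is an equivalence, it suffices to prove that $e \Lexp g$ and $g \Lexp e$ are equivalences whenever $e$ is an equivalence and $g$ is arbitrary. Both cases are diagram chases on the defining square of the pullback-hom \eqref{Leibniz-exponential}, using three standard facts about the wild category of types: precomposition and postcomposition by an equivalence are equivalences, the pullback of an equivalence along any map is an equivalence, and the 2-out-of-3 property of equivalences.

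For $e \orth g$, take $e : A \to B$ an equivalence and $g : X \to Y$ arbitrary, and consider \eqref{Leibniz-exponential} instantiated with the domain map set to $e$. Since $e$ is an equivalence, the precomposition maps $X^e : X^B \to X^A$ and $Y^e : Y^B \to Y^A$ are equivalences. In the pullback square, $Y^e$ is one of the legs of the cospan, so its pullback---the projection from the pullback vertex in \eqref{Leibniz-exponential} onto $X^A$---is also an equivalence. By construction, $e \Lexp g$ postcomposed with that projection equals $X^e$, which is an equivalence; hence $e \Lexp g$ is an equivalence by 2-out-of-3.

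For $g \orth e$, take $g : A \to B$ arbitrary and $e : X \to Y$ an equivalence, and instantiate \eqref{Leibniz-exponential} with the codomain map set to $e$. This time it is postcomposition that matters: $e^A : X^A \to Y^A$ and $e^B : X^B \to Y^B$ are equivalences. Now $e^A$ is a leg of the cospan in the pullback square, so the opposite projection from the pullback vertex onto $Y^B$ is an equivalence; composing $g \Lexp e$ with it yields $e^B$, again an equivalence, and 2-out-of-3 gives that $g \Lexp e$ is an equivalence. Invoking \cref{lem:orth-vs-Lexp} in both cases then yields $e \orth g$ and $g \orth e$.

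I expect the only delicate point to be the bookkeeping in reading off \eqref{Leibniz-exponential}: keeping straight which of the two projections out of the pullback vertex is the pullback of the relevant equivalence, and with which of the ``bend'' maps it composes to reconstruct $e \Lexp g$ (respectively $g \Lexp e$). A variant that sidesteps the diagram argues directly from the definition of orthogonality: given a square with top map $u$, the type $\sum_{d : B \to Y}(u = d \circ e)$ is a fiber of the equivalence $(-) \circ e$ and hence contractible; once it is contracted, the remaining filler data forms a fiber of $\ap_{(-) \circ e}$, again contractible, so the type of diagonal fillers is contractible ($g \orth e$ being symmetric via $e \circ (-)$). The route through \cref{lem:orth-vs-Lexp} is shorter.
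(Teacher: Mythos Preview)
Your proof is correct and matches the paper's approach essentially verbatim: both reduce via \cref{lem:orth-vs-Lexp} to showing that $e \Lexp g$ (resp.\ $g \Lexp e$) is an equivalence, and both do so by noting that in the pullback square of~\eqref{Leibniz-exponential} the horizontal (resp.\ vertical) maps become equivalences, the pullback of an equivalence is an equivalence, and 2-out-of-3 finishes. Your bookkeeping on which projection is involved is also correct.
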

\begin{proof}
  Let \(e : A \to B\) be an equivalence and \(g : X \to Y\) an arbitrary map.
  Recalling~\eqref{Leibniz-exponential}, we consider the diagram
  \[
    \begin{tikzcd}[column sep=.5cm]
      X^B
      \ar[dr,"e \lexp g"]
      \ar[drr,bend left=20,"X^e"]
      \ar[ddr,bend right,"g^B"]
      \\
      & X^A \times_{B^A} Y^B \pbcorner
      \ar[d] \ar[r,"t"]
      & X^A \ar[d,"g^A"]
      \\
      & Y^B \ar[r,"Y^e"]
      & Y^A
    \end{tikzcd}
  \]
  Since \(e\) is an equivalence, so is the precomposition map \(Y^e\) and hence
  its pullback~\(t\).
  The postcomposition map \(X^e\) is also an equivalence, so that
  \(e \Lexp g\) is as well by 2-out-of-3.

	If $e$ and $g$ are swapped, the vertical, instead of the horizontal, maps are equivalences.
\end{proof}

\begin{lemma}[\flink{Lemma-4-5}]\label{lem:left-orth-pushout-products}
	Left orthogonal maps are closed under pushout-products with arbitrary functions.
	That is, if $i$ is left orthogonal to $f$ and $j$ is any map, then $i \Ltimes j$ is left orthogonal to $f$.
\end{lemma}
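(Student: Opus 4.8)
The plan is to reduce everything to \cref{lem:orth-vs-Lexp}, which translates orthogonality into the statement that a pullback-hom is an equivalence. So, writing the hypothesis as "$i \Lexp f$ is an equivalence'', it suffices to show that $(i \Ltimes j) \Lexp f$ is an equivalence; then \cref{lem:orth-vs-Lexp} gives $i \Ltimes j \orth f$.

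The first step is to rewrite the object $(i \Ltimes j) \Lexp f$ using the internal adjunction. By commutativity of the pushout-product (\cref{Leibniz-product-associative-commutative}) we have $i \Ltimes j = j \Ltimes i$, so applying $(-) \Lexp f$ and then \cref{leibniz-internal-adjunction} (with the roles of $i$ and $j$ exchanged) yields an equality of objects $(i \Ltimes j) \Lexp f = (j \Ltimes i) \Lexp f = j \Lexp (i \Lexp f)$ in $\Map$. Since "being an equivalence'' is a property of an object of $\Map$ — it depends only on the underlying function and hence transports along identifications of objects — it now suffices to prove that $j \Lexp (i \Lexp f)$ is an equivalence.

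For this, I would observe that $i \Lexp f$ is itself an equivalence: this is precisely \cref{lem:orth-vs-Lexp} applied to the hypothesis $i \orth f$. By \cref{lem:equivs-are-orth}, every map — in particular $j$ — is left orthogonal to any equivalence, so $j \orth (i \Lexp f)$. A second application of \cref{lem:orth-vs-Lexp}, now to the pair $j$ and $i \Lexp f$, shows that $j \Lexp (i \Lexp f)$ is an equivalence. Transporting back along the equality of the previous paragraph, $(i \Ltimes j) \Lexp f$ is an equivalence, and a final use of \cref{lem:orth-vs-Lexp} gives $i \Ltimes j \orth f$. The argument is entirely formal once the Leibniz adjunction is in hand; the only point deserving care is the bookkeeping in the first step — namely that \cref{leibniz-internal-adjunction} provides an honest equality of objects of $\Map$ (rather than merely an isomorphism, or a natural equivalence of represented functors), which is what lets us transfer the property of being an equivalence.
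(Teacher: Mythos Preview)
Your proof is correct and follows essentially the same route as the paper: translate orthogonality to ``$(-)\Lexp(-)$ is an equivalence'' via \cref{lem:orth-vs-Lexp}, use commutativity of $\Ltimes$ and \cref{leibniz-internal-adjunction} to rewrite $(i \Ltimes j)\Lexp f$ as $j \Lexp (i \Lexp f)$, and conclude via \cref{lem:equivs-are-orth}. Your version is slightly more explicit in unpacking the final step (applying \cref{lem:orth-vs-Lexp} once more to pass from $j \orth (i \Lexp f)$ to the equivalence statement), but the argument is the same.
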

\begin{proof}
	By \cref{lem:orth-vs-Lexp}, we can assume that $i \Lexp f$ is an equivalence, and we have to show that $(i \Ltimes j) \Lexp f$ is an equivalence.
	We have $(i \Ltimes j) \Lexp f = (j \Ltimes i) \Lexp f$ by commutativity of~$\Ltimes$ (\cref{Leibniz-product-associative-commutative}), and the latter expression equals $j \Lexp (i \Lexp f)$
	by \cref{leibniz-internal-adjunction}, which is an equivalence by \cref{lem:equivs-are-orth}.
\end{proof}

Finally, we want to show that left orthogonal maps are closed under retracts.
Here, ``retract'' is to be understood in the usual categorical sense in the wild
category \(\Map\), i.e.\ we have commutative squares
\[
  \begin{tikzcd}
    \bullet \ar[d,"j"] \ar[r,"s"] & \bullet \ar[d,"i"] \ar[r,"r"]
    \ar[dl,Rightarrow,"\alpha"',shorten=2mm]
    & \bullet \ar[d,"j"]
    \ar[dl,Rightarrow,"\beta"',shorten=2mm]
    \\
    \bullet \ar[r,"s'"] & \bullet \ar[r,"r'"]
    & \bullet
  \end{tikzcd}
\]
that compose to \((\id,\id,\refl)\).
Unravelling this definition, we see that it is a reformulation of
\cite[Definition~4.7.2]{HoTTBook}.

\begin{lemma}[\flink{Lemma-4-6}]\label{lem:left-orth-retracts}
	If $j$ is a retract of $i$, and $i \orth f$, then $j \orth f$.
\end{lemma}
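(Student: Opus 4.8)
The plan is to reduce the statement $j \orth f$ to the already-proved fact that $i \orth f$ by exploiting the retract structure. Recall (\cref{lem:orth-vs-Lexp}) that $i \orth f$ is equivalent to $i \Lexp f$ being an equivalence, so it suffices to show that $j \Lexp f$ is an equivalence given that $i \Lexp f$ is. The key observation is that $\Lexp f$, being (the first-argument-contravariant component of) a wild functor on $\Map$ (\cref{Leibniz-adjunction-main-thm:alt}), sends a retract diagram for $j$ over $i$ in $\Map$ to a retract diagram for $j \Lexp f$ over $i \Lexp f$ in $\Map$; but then, passing to codomains (which is itself functorial), we get that the map $j \Lexp f$ is a retract of the map $i \Lexp f$ in the wild category $\Map$ of types. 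A retract of an equivalence is an equivalence, so we are done.

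Concretely, I would first spell out the retract hypothesis: we are given commuting squares $(s,\alpha \dots)$ and $(r,\beta \dots)$ in $\Map$ as in the diagram preceding the lemma, exhibiting $j$ as a retract of $i$, with the composite equal to the identity arrow $(\id,\id,\refl)$ on $j$. Applying the wild functor $(-) \Lexp f$ (contravariantly in the first slot) to this data yields arrows in $\Map$
\[
  (j \Lexp f) \xrightarrow{\;(r \Lexp f)\;} (i \Lexp f) \xrightarrow{\;(s \Lexp f)\;} (j \Lexp f)
\]
whose composite is the identity on $j \Lexp f$, since wild functors preserve composition and identities (see \cref{footnote:wild-functor}). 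Thus $j \Lexp f$ is a retract of $i \Lexp f$ in $\Map$. Since an arrow in $\Map$ is a commutative square, a retract of an arrow in $\Map$ in particular makes the underlying map on domains a retract of the underlying map on domains, and likewise for codomains; so the function $j \Lexp f$ is a retract of the function $i \Lexp f$ in the wild category of types, in the sense of \cite[Definition~4.7.2]{HoTTBook}. By hypothesis and \cref{lem:orth-vs-Lexp}, $i \Lexp f$ is an equivalence, and equivalences are closed under retracts \cite[Lemma~4.7.4]{HoTTBook}; hence $j \Lexp f$ is an equivalence, and another application of \cref{lem:orth-vs-Lexp} gives $j \orth f$.

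The main obstacle is the bookkeeping around what ``retract in $\Map$'' unfolds to and verifying that the wild functor $(-)\Lexp f$ genuinely carries this structure — in particular that the $2$-cells $\alpha, \beta$ and the coherence witnessing that the composite square is the identity are respected. Since our wild functors (footnote~\ref{footnote:wild-functor}) preserve composition and identities but do not a priori carry higher coherence data, one must check that the retract equation, which lives at the level of arrows (not above associativity), is indeed transported; this is exactly the situation \cref{proto-wild-cats} is designed for, as the property ``is a retract'' does not mention the wild-category equations. An alternative, more hands-on route — likely the one used in the formalization — is to avoid invoking functoriality of $\Lexp$ abstractly and instead directly construct, from a commuting square against $j$, a diagonal filler by precomposing with the retract data $s, s'$ to get a square against $i$, extracting its unique filler, and postcomposing with $r, r'$; one then checks that this yields a filler against $j$ and that the assignment is an equivalence on the respective types of fillers (using the contractibility for $i$). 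Either way, the conceptual content is just ``retracts of equivalences are equivalences,'' and the only real work is the diagram chase.
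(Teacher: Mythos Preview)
Your proposal is correct and matches the paper's own proof essentially line for line: reduce via \cref{lem:orth-vs-Lexp} to showing that $j \Lexp f$ is an equivalence, observe that the wild functor $(-)\Lexp f$ preserves retracts so that $j \Lexp f$ is a retract of the equivalence $i \Lexp f$, and conclude by closure of equivalences under retracts~\cite[Theorem~4.7.4]{HoTTBook}. The extra discussion of coherence bookkeeping and the alternative hands-on filler construction is fine commentary but not needed for the argument.
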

\begin{proof}
  Suppose that \(j\) is a retract of \(i\) and that \(i \Lexp f\) is an
  equivalence. We have to prove that \(j \Lexp f\) is also an equivalence.
  Since equivalences are closed under retracts~\cite[Theorem~4.7.4]{HoTTBook},
  it suffices to show that \(j \Lexp f\) is a retract of
  \(i \Lexp f\).
  But this holds because \((-) \Lexp f\) is a wild functor (by
  \cref{Leibniz-adjunction-main-thm:alt}), and any wild functor preserves retracts.
\end{proof}

\subsection{Simplicial Type Theory}

Simplicial type theory was introduced by Riehl and Shulman~\cite{RS2017};
we follow the ``internal'' approach of \cite{GWB2024,GWB2025}, generalized (i.e.\ with weaker assumptions) as described in \cref{sec:foundational-setup}.

\subsubsection*{Assumption (\flink{Assumption})}
We assume that we are given a set $I$ (the ``interval'') together with $0, 1: I$ and binary operations $\intmeet, \intjoin$ that turn $I$ into a bounded distributive lattice.
Explicitly, the equations are:

\begin{align*}
  x \intmeet x &= x
  &
    x \intjoin x &= x
  &
    x \intmeet (x \intjoin y) &= x
  \\
  x \intmeet y &= y \intmeet x
  &
    x \intjoin y &= y \intjoin x
  &
    x \intjoin (x \intmeet y) &= x
  \\
  x \intmeet (y \intmeet z) &= (x \intmeet y) \intmeet z
  &
    x \intjoin (y \intjoin z) &= (x \intjoin y) \intjoin z
  &
    x \intjoin (y \intmeet z) &= (x \intmeet y) \intjoin (x \intmeet z)
  \\
  x \intmeet 1 &= x
  & x \intjoin 0 &= x
  \\
\end{align*}

As is standard, we write $x \leq y$ (and $y \geq x$) for $x \intmeet y = y$.
This turns $\leq$ into a reflexive and transitive binary order on $I$, with $0$~and~$1$ as minimal and maximal element.%
\footnote{The alternative definition of $x \leq y$ as $x \intjoin y = y$ is equivalent.}

An element of the $n$-simplex is given by a decreasing sequence $x_1 \geq x_2 \geq \ldots \geq x_n$ of interval variables.
For the implementation of the concept in type theory, it is convenient to add an additional $1$ in the beginning and $0$ at the end. 
So, for every $n : \N$, we define the type
\[
  \Delta^n \defeq
  \textstyle\sum{(x : \Fin {(n+2)} \to I)}\,(x_0 = 1) \wedge (x_{n+1} = 0) \wedge (\forall i. x_i \geq x_{i+1}).
\]
We apply the first projection implicitly which means that, for $x : \Delta^n$ and ${i : \Fin {(n+2)}}$, we simply write $x_i : I$.
As a simple example, taking \(n = 2\)
, we note that
\(
  \Delta^2 \simeq \SigmaT{x,y : I}{x \geq y},
\)
so that, plotting \(x,y\) along their usual axes, we can picture \(\Delta^2\) as in~\eqref{Delta-2-picture}.

\noindent\begin{minipage}{.5\linewidth}
\begin{equation}\label{Delta-2-picture}
  \begin{tikzcd}[column sep=.3cm, row sep=.3cm,%
    execute at end picture={
      \begin{pgfonlayer}{background}
        \foreach \Name in  {A,B,C}
        {\coordinate (\Name) at (\Name.center);}
        \fill[dashed,gray!40]
        (A) -- (B) -- (C) -- cycle;
      \end{pgfonlayer}
    }
    ]
    & |[alias=A]|(1,1)  \\
    |[alias=C]|(0,0) \ar[ur,shorten=-5pt] \ar[r,shorten=-3pt] & |[alias=B]|(1,0) \ar[u,shorten=-4pt]
  \end{tikzcd}
\end{equation}
\end{minipage}%
\begin{minipage}{.5\linewidth}
  \begin{equation}\label{Lambda-2-1-picture}
  \begin{tikzcd}[column sep=.3cm, row sep=.3cm]
    & (1,1)  \\
    (0,0) \ar[r,shorten=-3pt] & (1,0) \ar[u,shorten=-4pt]
  \end{tikzcd}
\end{equation}
\end{minipage}

Generalizing from the construction of low-dimensional horns in \cite{RS2017,GWB2024,GWB2025}, we define the general \((n,k)\)-horn \(\Lambda^n_k\), for $n : \N$ and $k : \Fin {(n+1)}$,
as a subtype of $\Delta^n$ by
\begin{equation*}
	\Lambda^n_k \defeq \SigmaT{x : \Delta^n}{\exists j : \Fin {(n+1)}. j \not= k \wedge x_j = x_{j+1}}.
\end{equation*}
We write $\lambda^n_k : \Lambda^n_k \to \Delta^n$ for the canonical subtype inclusion, i.e.\ for the first projection $\fst$.

Taking \(n = 2\) again and \(k = 1\), we note that
\[
  \Lambda^2_1 
  \simeq \SigmaT{x : \Delta^2}{(x_0 = x_1) \vee (x_2 = x_3)} 
  \simeq
  \SigmaT{x,y:I}{(x = 0) \vee (y = 1)},
\]
so that it is pictured as in~\eqref{Lambda-2-1-picture} and \(\lambda^2_1\) is
the canonical inclusion of \eqref{Lambda-2-1-picture} into
\eqref{Delta-2-picture}.

\begin{remark}
	The horn $\Lambda^n_k$ can be pictured as the standard simplex~$\Delta^n$ where the ``inner'' part (the single non-degenerated $n$\nobreakdash-cell) and the $k$-th face are missing.
	This is mirrored in the above definition: If $x$ satisfies $x_j = x_{j+1}$, then the tuple is an element of the $j$-th face.
\end{remark}

Following \cite{RS2017}, we now recall Segal types and inner anodyne maps.
\begin{definition}[\flink{STT.Definition-4-8} Segal types and inner anodyne maps]
	A type $X$ is a \emph{Segal type} if the map $X \to 1$ is right orthogonal to the horn inclusion $\lambda^2_1$.
	A function $i : A \to B$ is \emph{inner anodyne} if it is left orthogonal to every Segal type.
\end{definition}

By a result of Riehl and Shulman~\cite[Proposition 5.21]{RS2017},
the maps $\lambda^3_1$ and $\lambda^3_2$ are inner anodyne.
We generalize this result to all inner horn inclusions, i.e.\ all horn inclusions $\lambda^n_k$ where $k$ is strictly between $0$ and $n$.
This is a version of a result by Lurie~\cite[Corollary 2.3.2.2]{LurieHTT} in simplicial type theory.
\begin{theorem}[\flink{STT.Theorem-4-9}]\label{thm:inner-horns-are-anodyne}
	For $n : \N$ and $k$ with $0 < k < n$, the inner horn inclusion $\lambda^n_k$ is inner anodyne.
\end{theorem}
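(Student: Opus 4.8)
The plan is to reduce the general inner horn inclusion $\lambda^n_k$ to the $(2,1)$-horn inclusion $\lambda^2_1$ using the Leibniz calculus developed in \cref{sec:Leibniz-adjunction}. The key observation, going back to Joyal, is that inner horn inclusions can be built from $\lambda^2_1$ via pushout-products. Concretely, I would first establish a \emph{combinatorial decomposition}: there should be a finite composite (in the sense of \cref{sec:orthogonality}, i.e.\ a zig-zag of pushouts or a ``cellular'' presentation) expressing $\lambda^n_k$ in terms of pushout-products $\lambda^2_1 \Ltimes (\text{something})$ together with operations under which left orthogonal maps are closed. The most natural candidate is: the boundary inclusions $\partial\Delta^m \hookrightarrow \Delta^m$ arise as iterated pushout-products of endpoint inclusions $\{0\} \hookrightarrow \Delta^1$ (this is the simplicial analogue of the fact that $\partial[m] = $ the $m$-fold pushout-product of $\partial[1]$), and each inner horn inclusion $\lambda^n_k$ is built from such boundary inclusions and from $\lambda^2_1$ by pushout-product with further such inclusions. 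Since $I$ is only assumed to be a bounded distributive lattice, these ``equalities of maps'' of subtypes of $\Delta^n$ will have to be verified lattice-theoretically, working entirely with the meet/join presentation rather than with a total order.

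Once the decomposition is in place, the orthogonality argument is short and abstract. By \cref{lem:left-orth-pushout-products}, left orthogonal maps are closed under pushout-products with arbitrary maps; by \cref{lem:left-orth-retracts}, they are closed under retracts; and left orthogonal maps are trivially closed under composition and under pushout (the latter following from the universal properties, or from the fact that $(-)\Lexp f$ being an equivalence is preserved — one can also derive closure under pushout from the adjunction, since a pushout of $i$ along any map is a retract of, or assembled from, $i\Ltimes(\text{endpoint inclusion})$-type data). Applying these closure properties to the decomposition, and using that $\lambda^2_1$ is left orthogonal to every Segal type \emph{by definition} (a Segal type $X$ is one for which $X\to 1$ is right orthogonal to $\lambda^2_1$), we conclude that $\lambda^n_k$ is left orthogonal to every Segal type, i.e.\ inner anodyne. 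Equivalences are also orthogonal to everything (\cref{lem:equivs-are-orth}), so any reindexing equivalences introduced along the way are harmless.

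The main obstacle is the combinatorial decomposition step. In the classical (Joyal/Lurie) setting this is the content of the ``inner horn inclusions are in the saturated class generated by $\Lambda^2_1 \hookrightarrow \Delta^2$'' argument, and it is genuinely intricate: one typically filters the inclusion $\Lambda^n_k \hookrightarrow \Delta^n$ by a sequence of pushouts, each of which attaches a nondegenerate simplex and is a pushout of a lower inner horn inclusion, and proves the base case by hand. Transporting this to simplicial type theory with a merely distributive-lattice interval means re-expressing each such pushout square as an honest pushout of types, with the attaching maps given explicitly in terms of the lattice operations $\intmeet,\intjoin$, and checking the pushout property using the universal properties rather than any cofibration/fibration machinery. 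A secondary subtlety is bookkeeping in the wild-categorical setting: pushout-product and composition only satisfy their laws up to the identifications supplied by \cref{sec:Leibniz-adjunction}, so the decomposition must be phrased as a statement about \emph{objects of $\Map$ being equal} (as in \cref{Leibniz-product-associative-commutative} and \cref{leibniz-internal-adjunction}), and the orthogonality conclusions transferred along those equalities — which is exactly what the $\Fam$-side calculus and \cref{proto-wild-cats} were set up to make manageable.
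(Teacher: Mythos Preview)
Your high-level strategy (reduce to $\lambda^2_1$ via the Leibniz/orthogonality calculus) is the right one, but the concrete mechanism you propose --- a cellular filtration of $\lambda^n_k$ by pushouts of lower inner horns, with boundary inclusions expressed as iterated pushout-products of endpoint inclusions --- is not what the paper does, and you yourself flag that step as ``genuinely intricate'' without carrying it out. In the present setting (an interval that is merely a bounded distributive lattice, no modalities, wild categories) that filtration would require you to define $\partial\Delta^m$, prove the pushout-product decomposition of its inclusion, and verify each attaching square is an honest homotopy pushout of types, none of which is developed in the paper; you would also need closure of left-orthogonal maps under composition and cobase change, which the paper does not prove. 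So as a proof this is a genuine gap: the hard combinatorial lemma is asserted, not established.

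The paper sidesteps all of this with a single, noninductive trick (Joyal's retract argument, cf.\ Lurie's Proposition~2.3.2.1): one shows directly that $\lambda^n_k$ is a \emph{retract} of $\lambda^n_k \Ltimes \lambda^2_1$ in $\Map$. Then \cref{lem:left-orth-pushout-products} gives $\lambda^n_k \Ltimes \lambda^2_1 \orth (X\to 1)$ from $\lambda^2_1 \orth (X\to 1)$, and \cref{lem:left-orth-retracts} transfers this to $\lambda^n_k$. No filtration, no induction, no closure under pushouts or composition. The section sends $x:\Delta^n$ to $(x;(x_k,x_{k+1})) : \Delta^n\times\Delta^2$, and the retraction sends $(x,y):\Delta^n\times\Delta^2$ to the tuple with $i$-th entry $x_i \intjoin y_1$ for $i\le k$ and $x_i \intmeet y_2$ for $i>k$; one checks these restrict correctly to the horn subtypes (this is where $0<k<n$ is used, to ensure $j=0$ and $j=n$ are admissible missing faces) and compose to the identity using only the lattice axioms. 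Everything lives over sets, so the square-composition coherence is trivial. Compared to your plan, this uses exactly the two closure lemmas already proved and nothing more.
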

\begin{proof}
	We show that $\lambda^n_k$ is a retract of the pushout-product $\lambda^n_k \Ltimes \lambda^2_1$.
	Using \cref{lem:left-orth-pushout-products,lem:left-orth-retracts}, this is enough to conclude that any map that is right orthogonal to $\lambda^2_1$ is also right orthogonal to~$\lambda^n_k$.

	To prove the claimed retract property, we need to define maps
        \[
          s: \lambda^n_k \to \lambda^n_k \Ltimes \lambda^2_1 \quad\text{and}\quad
          r: \lambda^n_k \Ltimes \lambda^2_1 \to \lambda^n_k
        \]
	and show that they compose to the identity:
	\begin{equation}\label{eq:retract-diagram}
		  \begin{tikzcd}
			\Lambda^n_k \ar[d,"\lambda^n_k"] \ar[r,"s_\mathsf{dom}"] &
			(\Delta^n \times \Lambda^2_1) +_{\Lambda^n_k \times \Lambda^2_1} (\Lambda^n_k \times \Delta^2)
			\ar[d,"\lambda^n_k \Ltimes \lambda^2_1"] \ar[r,"r_\mathsf{dom}"] & \Lambda^n_k \ar[d,"\lambda^n_k"] \\
			\Delta^n \ar[r,"s_\mathsf{cod}"] & \Delta^n \times \Delta^2 \ar[r,"r_\mathsf{cod}"] & \Delta^n
		\end{tikzcd}
	\end{equation}

	We start by defining $s$ by first giving its domain and codomain parts; we omit the propositional information.
	\[
		s_\mathsf{dom}(x) \defeq \mathsf{inr}(x; (x_{k},x_{k+1})) \quad\text{and}\quad
		s_\mathsf{cod}(x) \defeq (x; (x_{k},x_{k+1})),
	\]
	This makes the left square commute judgmentally, and thus defines the section $s$.

	The retraction $r$ is almost as easy to define.
	We start by giving the codomain part:
	\begin{align*}
		&r_\mathsf{cod}(x,y) \defeq \lambda i.
		\begin{cases}
			x_i \intjoin y_1, & \text{if } i \leq k,\\
			x_i \intmeet y_2,  & \text{if } i > k.
		\end{cases}
	\end{align*}
	Since the right vertical map $\lambda^n_k$ is an embedding, this fully determines the domain part $r_\mathsf{dom}$.
	To state it explicitly, it is a map out of a pushout and can thus be described as a pair of two maps,
	\[
          r_\mathsf{dom}^1 : \Delta^n \times \Lambda^2_1 \to \Lambda^n_k \quad\text{and}\quad
          r_\mathsf{dom}^2 : \Lambda^n_k \times \Delta^2 \to \Lambda^n_k,
	\]
	and both components are given by the same equation as $r_\mathsf{cod}$; we only need to check that the propositional conditions of the output can be constructed from the input.
	This is (almost) automatic for~$r_\mathsf{dom}^2$, where the missing face $k$ remains the same.
	For~$r_\mathsf{dom}^1(x,y)$, we need to check the face of the input in the second component; we either have $1 = y_1$ or $y_2 = 0$.
	In these two cases, we choose $j = 0$ and $j = n$ respectively, and both choices satisfy $j \not= k$ because we assumed $0 < k < n$.
	The two components $r_\mathsf{dom}^1$ and $r_\mathsf{dom}^2$ define a map out of the pushout because their definitions only differ in the propositional part.
	The square determined by $(r_\mathsf{dom},r_\mathsf{cod})$ again commutes almost judgmentally.

	Finally, we are required to check that $s$ and $r$ compose to the identity on $\lambda^n_k$.
	We verify this for the codomain part, i.e.\  the lower horizontal row in \eqref{eq:retract-diagram}.
	Given an input $x : \Delta^n$, the $i$-th component of $(r_\mathsf{cod} \circ s_\mathsf{cod})(x))$ is $x_i \intjoin x_{k}$ if $i \leq k$, and $x_i \intmeet x_{k+1}$ if $i > k$; both expressions are equal to $x_i$.
	The domain part follows automatically as an equation in a subtype of what we just checked; the composition of the two squares is trivial as the codomain~$\Delta^2$ is a~set.
\end{proof}

\begin{remark}
	While it can be proved that the pushout-product $\lambda^n_k \Ltimes \lambda^2_1$ is a map between sets, the argument does not require this.
	Just like \cite{RS2017}, we use a version of Joyal's lemma \cite[Proposition 2.3.2.1]{LurieHTT}, which is based on pushouts and retracts in the category of sets, while all our results are about the corresponding notions in the wild category of types.
	In homotopy type theory, the canonical inclusion of the univalent 1-category of sets into the wild category of types does not in general preserve pushouts (i.e.\ not all set-pushout squares are homotopy pushouts); fortunately, pushouts along embeddings (e.g.\ the horn inclusions) are preserved, which explains why no mismatch between the different notions of pushout occurs.
\end{remark}

\begin{remark}[\flink{STT.Remark-4-11}]
	Of course, the proof of \cref{thm:inner-horns-are-anodyne} also shows that Segal \emph{fibrations} (instead of Segal types) satisfy the lifting condition with respect to all inner horn inclusions: If a map ${p : Y \to X}$ is right orthogonal to $\lambda^2_1$, then it is also right orthogonal to all other inner horn inclusion.
\end{remark}

\section{Conclusions}

As an application of our first key contribution, the Leibniz adjunction in homotopy type theory (\cref{Leibniz-adjunction-main-thm:alt}),
we have shown that all inner horn inclusions are inner anodyne (\cref{thm:inner-horns-are-anodyne}) in simplicial type theory with an internal interval.
In contrast, in the setting of Riehl and Shulman~\cite{RS2017}, even the statement itself would necessarily be meta-theoretic, as the quantification over all inner horns would have to happen on the meta-level.
This alone would make a full formalization tricky.

However, \emph{if} one works in a framework in which such a meta-theoretic quantification is feasible, and where working with meta-theoretic statements is convenient, then the equivalent version of our result \cref{thm:inner-horns-are-anodyne} is actually easier to prove.
The reason for this is that definitional equalities are proof-irrelevant and the meta-theoretic level is therefore ``set-based'' (in the sense of homotopy type theory).
Thus, the required version of the Leibniz adjunction is the 1-categorical one.
Compared to our \cref{Leibniz-adjunction-main-thm:alt}, the 1-categorical Leibniz adjunction \cite[Prop.~5.20]{RS2017}
does not necessitate the tracking of coherences and thus avoids what
represented the overwhelming majority of the challenge we encountered in \cref{sec:Leibniz-adjunction}.
A framework in which working with such meta-theoretic statements is possible was
suggested by two of the authors ~\cite{KdJ_representing} using a two-level type
theory~\cite{2LTT}.

While both the original approach with a meta-theoretic interval and the approach with an internal interval type have their advantages,
an important feature of the latter is that it allowed us to present a complete formalization in Agda.
Given the technical difficulties associated with the Leibniz construction, together with the combinatorial complexity of Joyal's lemma and the resulting need for careful index tracking, we regard this formalization as a useful tool for verifying the correctness of our arguments.

\section{Acknowledgements}
We thank Christian Sattler for an interesting discussion on the Leibniz
adjunction and Aref Mohammadzadeh for pointing out some typos.
The first- and second-named authors were supported by the Royal Society [grant
numbers URF{\textbackslash}R1{\textbackslash}191055,
URF{\textbackslash}R{\textbackslash}241007]. In addition, the second-named
author was supported by the Engineering and Physical Sciences Research Council
[EP/Z000602/1], and the third-named author by the Knut and Alice Wallenberg
Foundation's program for mathematics.

\sloppy 
\printbibliography

\end{document}